\documentclass[11pt,a4paper]{amsart}

\usepackage{amsmath,amssymb,amsfonts,amsthm,enumerate,tikz,comment} 
\usepackage[british]{babel}

\title[Negligibility of parabolic elements]{Negligibility of parabolic elements in relatively hyperbolic groups}
\author{Motiejus Valiunas}
\date{\today}
\keywords{Relatively hyperbolic groups, parabolic elements, degree of commutativity}
\subjclass[2010]{20E06, 20F67, 20P05}
\address{Department of Mathematics, University of Southampton, University Road, Southampton SO17~1BJ}
\email{m.valiunas@soton.ac.uk}

\newcommand{\PP}{\mathcal{P}}
\newcommand{\HH}{\mathcal{H}}
\newcommand{\Z}{\mathbb{Z}}
\newcommand{\Arel}{\operatorname{Area}_{rel}}
\newcommand{\Cay}{\Gamma}
\newcommand{\dc}{\operatorname{dc}}

\setlength{\parindent}{0pt}
\setlength{\parskip}{6pt}

\theoremstyle{definition}
\newtheorem{defn}{Definition}[section]
\newtheorem{rmk}[defn]{Remark}
\newtheorem*{ntn}{Notation}
\newtheorem*{ack}{Acknowledgements}
\theoremstyle{plain}
\newtheorem{thm}[defn]{Theorem}
\newtheorem{cor}[defn]{Corollary}
\newtheorem{prop}[defn]{Proposition}
\newtheorem{lem}[defn]{Lemma}
\theoremstyle{remark}

\usetikzlibrary{decorations.markings}
\tikzset{->-/.style={decoration={
  markings,
  mark=at position .5 with {\arrow{>}}},postaction={decorate}}}

\begin{document}

\begin{abstract}
We study density of parabolic elements in a finitely generated relatively hyperbolic group $G$ with respect to a word metric. We prove this density to be zero (apart from degenerate cases) and the limit defining the density to converge exponentially fast; this has recently been proven independently by W.\ Yang in \cite{yanggen}. As a corollary, we obtain the analogous result for the set of commuting pairs of elements in $G^2$, showing that the degree of commutativity of $G$ is equal to zero.
\end{abstract}
\maketitle

\section{Introduction}
\label{s:intro}

A group $G$ is hyperbolic to a collection of subgroups $\{ H_\omega \}_{\omega \in \Omega}$ if, loosely speaking, it is hyperbolic except for the part that is inside the set $\PP$ consisting of elements in the subgroups $H_\omega$ and their conjugates. It is therefore natural to ask whether taking elements from $G$ ``at random'' we can expect these elements to be outside $\PP$ and therefore ``behave like in a hyperbolic group''. We prove that this is the case if $G$ is finitely generated and the sequence of measures that makes sense of the words ``at random'' comes from a word metric on $G$.

More precisely, let $G$ be a finitely generated group and let $X$ be a finite generating set for $G$. Denote by $|\cdot|_X: G \to \Z_{\geq 0}$ the \emph{word metric} on $G$ with respect to $X$. For any $n \in \Z_{\geq 0}$, define the sets $$S_{G,X}(n) := \{ g \in G \mid |g|_X = n \},$$ the \emph{sphere} of radius $n$ in the Cayley graph $\Cay(G,X)$, and $$B_{G,X}(n) := \{ g \in G \mid |g|_X \leq n \} = \bigcup_{i=0}^n S_{G,X}(n),$$ the \emph{ball} of radius $n$ in $\Cay(G,X)$. The following definition can be used to characterise ``small'' subsets of $G$. The term ``negligible'' to describe small subsets of $G^r$ (for a finitely generated infinite group $G$) was coined in \cite{kapovich}, although the definition given therein is not equivalent to Definition \ref{d:negl} here; in the case $r = 1$, the following definition is used implicitly in \cite{burillo}.

\begin{defn} \label{d:negl}
Let $r \geq 1$, and let $\mathcal{S} \subseteq G^r$ be a subset. For $n \geq 0$, let $$\delta_X(\mathcal{S},n) := \frac{|\mathcal{S} \cap B_{G,X}(n)^r|}{|B_{G,X}(n)|^r}$$ be the fraction of elements in $B_{G,X}(n)^r$ that belong to $\mathcal{S}$. The set $\mathcal{S}$ is said to be \emph{negligible} in $G$ with respect to $X$ if $\delta_X(\mathcal{S},n) \to 0$ as $n \to \infty$. Moreover, $\mathcal{S}$ is said to be \emph{exponentially negligible} in $G$ with respect to $X$ if in addition there exists a constant $\rho > 1$ such that $\delta_X(\mathcal{S},n) \leq \rho^{-n}$ for all sufficiently large $n$.
\end{defn}

There are various definitions of relatively hyperbolic groups, due to M.~Gromov \cite{gromov}, B.~Farb \cite{farb}, C.~Dru\c{t}u \& M.~Sapir \cite{drutu}, D.~V.~Osin \cite{osin06def}, D.~Groves \& J.~S.~Manning \cite{groves}, and B.~H.~Bowditch \cite{bowditch}. In this paper we use the definition by Osin; for a precise statement, see Section \ref{s:prelim}. Our main result is as follows:

\begin{thm}
\label{t:main}
Let $G$ be a finitely generated group that is not virtually cyclic, and let $X$ be a finite generating set. Suppose that $G$ is hyperbolic with respect to a collection of \emph{proper} subgroups $\{ H_\omega \}_{\omega \in \Omega}$. Let $$\mathcal{P} := \bigcup_{\substack{\omega \in \Omega \\ g \in G}} H_\omega^g$$ be the set of \emph{parabolic} elements of $G$. Then $\mathcal{P}$ is exponentially negligible in $G$ with respect to $X$.
\end{thm}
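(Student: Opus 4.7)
My plan is to compare $|\mathcal{P} \cap B_{G,X}(n)|$ with $|B_{G,X}(n)|$: the goal is to show that the denominator grows like $\lambda^n$ for some $\lambda > 1$, while the numerator is bounded by $\mu^n$ for some $\mu < \lambda$, yielding exponential decay of $\delta_X(\mathcal{P}, n)$. For the denominator, since $G$ is not virtually cyclic and is relatively hyperbolic with proper peripheral subgroups, $G$ is non-elementary and therefore contains a non-abelian free subgroup; this gives $|B_{G,X}(n)| \geq c\lambda^n$ for some $c > 0$ and $\lambda > 1$. Since $\Omega$ may be assumed finite (as in Osin's definition), it suffices to prove exponential negligibility of each $\mathcal{P}_\omega := \bigcup_{g \in G} H_\omega^g$ separately.

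Each $p \in \mathcal{P}_\omega$ may be written $p = g^{-1} h g$ with $h \in H_\omega$ and $g$ a shortest representative of the left coset $H_\omega g \in H_\omega \backslash G$; using the almost malnormality of peripheral subgroups in Osin's framework, this decomposition is finite-to-one. The key geometric input is an inequality of the form
$$
|g^{-1} h g|_X \;\geq\; 2|g|_X + |h|_X - C
$$
for a constant $C$ depending only on $G$, $X$, and $H_\omega$. I would derive this via relative normal forms: if $g$ admits an Osin-type normal form $g = x_0 h_1 x_1 \cdots h_k x_k$ and $g$ is shortest in its coset, then $g^{-1} h g$ inherits a normal form of essentially doubled relative length, and the bounded coset penetration property rules out substantial cancellation between the two halves in $\Cay(G, X)$.

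Feeding the inequality into the count yields
$$
|\mathcal{P}_\omega \cap B_{G,X}(n)| \;\leq\; D \sum_{k=0}^{\lfloor (n+C)/2 \rfloor} |S_{G,X}(k)| \cdot |\{h \in H_\omega : |h|_X \leq n - 2k + C\}|
$$
for some $D$ absorbing the ambiguity of the parametrisation. A direct computation shows this is $O(\mu^n)$ for some $\mu < \lambda$, provided the relative growth rate $\lambda_\omega := \limsup_m |H_\omega \cap B_{G,X}(m)|^{1/m}$ satisfies $\lambda_\omega < \lambda$. The latter is a growth-tightness statement for non-elementary relatively hyperbolic groups, which can be secured by exhibiting a free subgroup of $G$ whose generators lie essentially outside $H_\omega$, producing strictly more growth than $H_\omega$ alone provides.

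The principal obstacle is the length inequality: the coefficient $2$ on $|g|_X$ is critical for the final counting to dominate $\lambda^n$, and ruling out shortcuts through nearby peripheral cosets demands a careful application of the BCP property together with Osin's relative isoperimetric inequality. A secondary subtlety is the strict inequality $\lambda_\omega < \lambda$, which requires a growth-tightness argument specific to the non-elementary relatively hyperbolic setting; once both are in place, the counting is routine and yields the claimed exponential decay rate.
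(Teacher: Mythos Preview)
Your overall strategy matches the paper's: parametrise parabolic elements as conjugates $g^{-1}hg$ with $g$ chosen minimally, establish a length inequality of the shape $|g^{-1}hg|_X \gtrsim 2|g|_X + |h|_X$, feed this into a double sum over spheres and peripheral balls, and close with a growth-gap statement $\lambda_\omega < \lambda$. The final counting is indeed routine once both inputs are in place, and your summation is essentially the paper's Theorem~\ref{t:parbound1}.

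There is, however, a genuine gap in your route to $\lambda_\omega < \lambda$. Exhibiting a free subgroup of $G$ with generators ``essentially outside $H_\omega$'' does not by itself yield a strict inequality of exponential growth rates: $H_\omega$ may itself have exponential growth (peripheral subgroups are arbitrary finitely generated groups), and nothing in a free-subgroup argument compares the two rates. What is actually required is growth tightness. The paper secures it by a Dehn-filling trick: pick an infinite-order hyperbolic element, adjoin its elementary closure $E_G(g)$ as an extra peripheral subgroup, and apply Osin's Dehn filling theorem to produce an \emph{infinite} normal subgroup $N \trianglelefteq G$ with $H_\omega \cap N$ trivial. Then $|H_\omega \cap B_{G,X}(n)|$ is bounded by a constant times $|B_{G/N,\overline X}(n)|$, and Yang's growth-tightness theorem (Theorem~\ref{t:grtight}) gives $\mu(G/N,\overline X) < \mu(G,X)$. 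This is the substance of Lemma~\ref{l:expneg}, and your sketch does not supply it.

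A secondary point on the length inequality: the paper does not obtain a uniform additive constant $C$. It proves the inequality with a logarithmic error term $f(n)=O(\log n)$ (Corollary~\ref{c:logclose}), and even then only when $|h|_X$ exceeds a fixed threshold (Lemma~\ref{l:hlong}); the finitely many $h$ below threshold are handled separately by a half-ball estimate (Lemma~\ref{l:hshort}). A constant $C$ is in fact available via Hruska, as the paper remarks, but your appeal to ``BCP rules out cancellation'' understates the work involved --- the paper runs a conjugacy-diagram tripod analysis in $\Cay(G,X\cup\HH)$ rather than a direct normal-form computation, and the case split according to whether the single peripheral edge $Q$ connects to the opposite side of the tripod is where the argument actually bifurcates.
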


\begin{rmk}
During the process of writing up this paper, the author has discovered a more general result by W.~Yang in \cite{yanggen}. In particular, Theorem 1.7 therein is the same as Theorem \ref{t:main} above, and it is closely related to a genericity result that works in a more general setting \cite[Theorem A]{yanggen}. Thus most of this paper merely gives an alternative proof to a recent but already-known result.
\end{rmk}

As an immediate consequence of the Theorem we obtain:
\begin{cor}
\label{c:loxo}
Let $G$ and $X$ be as in Theorem \ref{t:main}. Let $\mathcal{Q} \subseteq G$ be the set of finite order elements. Then $\mathcal{P} \cup \mathcal{Q}$ is exponentially negligible in $G$ with respect to $X$.
\end{cor}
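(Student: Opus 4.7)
The strategy is to absorb the non-parabolic finite-order elements into an enlarged peripheral structure and then invoke Theorem~\ref{t:main} once more. Every $g\in\mathcal{Q}$ generates a finite cyclic subgroup, and by a standard structural result for relatively hyperbolic groups (due to Osin), $G$ contains only finitely many conjugacy classes of finite subgroups that are not conjugate into some $H_\omega$. I would choose representatives $F_1,\dots,F_k$ of these classes, taken to be maximal finite subgroups, so that every element of $\mathcal{Q}$ lies in a conjugate of some $H_\omega$ or some $F_i$.

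Next I would verify that $G$ remains hyperbolic relative to the enlarged collection $\{H_\omega\}_{\omega\in\Omega}\cup\{F_1,\dots,F_k\}$; this is a standard robustness property of Osin's definition, namely that finite subgroups may be freely adjoined to a peripheral structure. Each $F_i$ is a proper subgroup, since $G$ is infinite (any non-virtually-cyclic group being infinite), so all hypotheses of Theorem~\ref{t:main} continue to hold for the new structure.

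Applying Theorem~\ref{t:main} to this enlarged structure then shows that its associated set of parabolic elements is exponentially negligible. By construction that set is precisely $\mathcal{P}\cup\bigcup_{i,\,g\in G}F_i^{g}$, which contains $\mathcal{P}\cup\mathcal{Q}$; since exponential negligibility is inherited by subsets directly from Definition~\ref{d:negl}, the corollary follows. The only genuine hurdles are locating and citing the two structural results above in the precise form suitable to Osin's framework; both are well-documented in the literature, but neither is entirely trivial to formulate, and getting the precise statements right is the main task beyond invoking Theorem~\ref{t:main}.
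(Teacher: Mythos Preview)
Your proposal is correct and follows essentially the same route as the paper: enlarge the peripheral structure by finitely many finite subgroups so that all torsion becomes parabolic, then reapply Theorem~\ref{t:main}. The only cosmetic difference is that the paper adjoins finite \emph{cyclic} groups generated by representatives of the finitely many conjugacy classes of finite-order hyperbolic elements (citing \cite[Theorem~4.2]{osin06def}), rather than maximal finite subgroups, but the argument is otherwise identical.
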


The next result computes the degree of commutativity of relatively hyperbolic groups. The \emph{degree of commutativity} of a finitely generated group $G$ with respect to a finite generating set $X$ was defined by \cite{amv} as
$$\dc_X(G) := \limsup_{n \to \infty} \frac{|\{ (x,y) \in B_{G,X}(n)^2 \mid xy=yx \}|}{|B_{G,X}(n)|^2}.$$
It has been conjectured \cite[Conjecture 1.6]{amv} that $\dc_X(G) = 0$ whenever $G$ is not virtually abelian (independently of $X$). The next corollary confirms the conclusion of the conjecture in the case when $G$ is a non-elementary relatively hyperbolic group, thereby generalising the result for hyperbolic groups in \cite[Theorem 1.7]{amv}.

\begin{cor}
\label{c:dc}
Let $G$ and $X$ be as in Theorem \ref{t:main}. Then the set of pairs of commuting elements, $\{ (x,y) \in G^2 \mid xy=yx \}$, is exponentially negligible in $G$ with respect to $X$.
\end{cor}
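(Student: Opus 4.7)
The plan is to combine Corollary \ref{c:loxo} with the fact that centralisers of loxodromic elements in relatively hyperbolic groups are virtually cyclic. Partition the set of commuting pairs in $B_{G,X}(n)^2$ as $\mathcal{A}_n \cup \mathcal{B}_n$, where
\begin{equation*}
\mathcal{A}_n = \{(x,y) \in B_{G,X}(n)^2 : xy=yx, \ x \in \mathcal{P} \cup \mathcal{Q}\}
\end{equation*}
and $\mathcal{B}_n$ is the complementary commuting locus, consisting of pairs with $x \notin \mathcal{P} \cup \mathcal{Q}$. For $\mathcal{A}_n$, the crude bound $|\mathcal{A}_n| \leq |(\mathcal{P} \cup \mathcal{Q}) \cap B_{G,X}(n)| \cdot |B_{G,X}(n)|$ gives $|\mathcal{A}_n|/|B_{G,X}(n)|^2 \leq \delta_X(\mathcal{P} \cup \mathcal{Q}, n)$, which decays exponentially by Corollary \ref{c:loxo}.

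For $\mathcal{B}_n$, the first coordinate $x$ is loxodromic (of infinite order and non-parabolic), and by standard results about relatively hyperbolic groups its centraliser $C_G(x)$ is virtually cyclic, contained in the unique maximal elementary subgroup $E_G(x)$. I would then invoke a uniform linear bound $|C_G(x) \cap B_{G,X}(n)| \leq K(n+1)$ valid for all loxodromic $x$ with $K = K(G,X)$ independent of $x$; this rests on a positive lower bound for the stable translation length on $\Cay(G,X)$ of loxodromic elements, together with a uniform bound on the order of finite subgroups of $E_G(x)$ (which exists because $E_G(x)$ is not conjugate into any peripheral). Consequently $|\mathcal{B}_n| \leq |B_{G,X}(n)| \cdot K(n+1)$, and its density is at most $K(n+1)/|B_{G,X}(n)|$.

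Since $G$ is not virtually cyclic and is relatively hyperbolic with proper peripherals, it contains a non-abelian free subgroup, so $|B_{G,X}(n)| \geq C\lambda^n$ for some $\lambda > 1$ and all sufficiently large $n$. Hence $K(n+1)/|B_{G,X}(n)|$ decays exponentially, and together with the $\mathcal{A}_n$ estimate this yields the desired exponential bound on the density of commuting pairs. The main obstacle is securing the uniform constant $K$: while a Delzant-type lower bound on loxodromic translation lengths is classical for hyperbolic groups, in the relatively hyperbolic setting one needs to control both translation lengths and torsion uniformly across all loxodromic elements. Once $K$ is fixed, the remainder is a routine polynomial-versus-exponential comparison between the linear growth of a single virtually cyclic centraliser and the exponential growth of $G$.
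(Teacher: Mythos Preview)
Your proposal is correct and matches the paper's approach: the paper likewise disposes of the parabolic/finite-order first coordinate via Corollary~\ref{c:loxo} (packaged through Lemma~\ref{l:amv}) and then bounds $|C_G(g)\cap B_{G,X}(n)|$ linearly and uniformly over all hyperbolic $g$ using precisely the two ingredients you name. The one point worth sharpening is that the uniform positive lower bound on stable translation length is taken in $\Cay(G,X\cup\HH)$ (Osin's \cite[Theorem~4.25]{osin06def}), not in $\Cay(G,X)$ directly, and the uniform torsion bound is supplied by \cite[Lemma~9.4]{louder}; since $|\cdot|_{X\cup\HH}\leq|\cdot|_X$, this is enough for the linear bound you want.
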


The structure of the paper is as follows. Section \ref{s:prelim} defines relatively hyperbolic groups and recalls some of the main results on the geometry of their Cayley graphs. Section \ref{s:geod} derives some further results relating geodesics and quasi-geodesics of the ``usual'' (i.e.\ locally compact) and ``coned-off'' (cf terminology in \cite{farb}) Cayley graphs. We give a proof of Theorem \ref{t:main} in Section~\ref{s:pftmain}, and proofs of Corollaries \ref{c:loxo} and \ref{c:dc} in Section \ref{s:cor}.

\begin{ntn}
For a group $G$ and a generating subset $Z \subseteq G$, we will write $Z^\ast$ for the set of all words over $Z \cup Z^{-1}$, and we will identify these words in the obvious way with paths starting at $1 \in G$ in the Cayley graph $\Cay(G,Z)$ (we do not require $Z$ to be finite and so $\Cay(G,Z)$ to be locally finite). Moreover, we will identify $G$ with the vertices of $\Cay(G,Z)$. Given a path $P$ in $\Cay(G,Z)$, we also say it is \emph{labelled} by a word $Q \in Z^\ast$ if $P = gQ$ (viewed as paths) for some $g \in G$, and we let $P_-$ (resp.\ $P_+$) be the starting (resp.\ ending) vertex of $P$. For a word $P \in Z^\ast$, we will write $\ell_Z(P)$ for the length of the path $P$ in $\Cay(G,Z)$ (= number of letters in $P \in Z^\ast$), and we will write $\overline{P} = \overline{P}_G$ for the corresponding element of $G$. For an element $g \in G$, we will write $|g|_Z$ for the word length of $g$ with respect to $Z$; in particular, if $p$, $q$ are vertices in $\Cay(G,Z)$, then the distance between them will be $|p^{-1}q|_Z$.
\end{ntn}

\begin{ack}
The author would like to thank his PhD advisor Armando Martino, without whose guidance this paper would not have been possible. The author is also grateful to Yago Antol\'in for valuable discussions and advice.
\end{ack}

\section{Preliminaries}
\label{s:prelim}

We use Osin's definition of relative hyperbolicity, given in \cite{osin06def}. For this, let $G$ be a group, $\{ H_\omega \}_{\omega \in \Omega}$ a collection of subgroups of $G$, and $X \subseteq G$ a subset. Define the group $$F := \left(\ast_{\omega \in \Omega} H_\omega\right) \ast F(X),$$ and let $\varphi: F \to G$ be the canonical homomorphism. If there exists a \emph{finite} subset $X \subseteq G$ as above such that $\varphi$ is surjective and $\ker(\varphi)$ is the normal closure of a \emph{finite} set $\mathcal{R} \subseteq F$, then $G$ is said to be \emph{finitely presented relative to} $\{ H_\omega \}_{\omega \in \Omega}$.

Moreover, if we define $$\HH := \bigcup_{\omega \in \Omega} \left( H_\omega \setminus \{1\} \right),$$ then any word $P \in \left( X \cup \HH \right)^\ast$ such that the image $\overline{P} = \overline{P}_F$ of $P$ in $F$ is in the kernel of $\varphi$ satisfies an equality $$\overline{P} = \prod_{i=1}^n f_i^{-1} r_i^{\varepsilon_i} f_i$$ for some $f_i \in F$, $r_i \in \mathcal{R}$ and $\varepsilon_i \in \{ \pm 1 \}$; let $\Arel(P)$ be the minimal value of $n$ such that $\overline{P}$ can be written as above. If there exists a constant $C \geq 0$ such that $$\Arel(P) \leq C \ell_{X \cup \HH}(P)$$ for every $P \in \left( X \cup \HH \right)^\ast$ such that $\overline{P}_G = 0$, then $G$ is said to \emph{satisfy a relative linear isoperimetric inequality} (with respect to $X$ and $\{ H_\omega \}_{\omega \in \Omega}$).

\begin{defn}
\label{d:rh}
The group $G$ is said to be \emph{hyperbolic relative to} $\{ H_\omega \}_{\omega \in \Omega}$ if it is finitely presented with respect to $\{ H_\omega \}_{\omega \in \Omega}$ and satisfies a relative linear isoperimetric inequality. We call the $H_\omega$ the \emph{peripheral subgroups} of $G$, and say $G$ is \emph{relatively hyperbolic} if it is hyperbolic relative to some collection of peripheral subgroups.
\end{defn}

For the remainder of the paper we fix a group $G$ and a collection of \emph{proper} subgroups $\{ H_\omega \}_{\omega \in \Omega}$, such that $G$ is hyperbolic relative to $\{ H_\omega \}_{\omega \in \Omega}$ (given some finite subset $X \subseteq G$, which is also fixed for now). We will usually assume that, moreover, $G$ is finitely generated and $X$ is a (finite) generating set.

It is worth noting that in this case Definition \ref{d:rh} is independent of a chosen generating set $X$. Indeed, given two finite generating sets $X$ and $Y$, suppose $G$ is finitely presented relative to $X$. Then $G$ is also finitely presented relative to $Y$ since the canonical homomorphism $$\widetilde\varphi: \widetilde{F} := \left(\ast_{\omega \in \Omega} H_\omega\right) \ast F(Y) \to G$$ is surjective and $$\ker(\widetilde\varphi) = \langle\!\langle \{ \psi_Y(r) \mid r \in \mathcal{R} \} \cup \{ y^{-1}\psi_Y(\psi_X(y)) \mid y \in Y \} \rangle\!\rangle^{\widetilde{F}},$$ where $\mathcal{R}$ is as above, and $\psi_X(P)$ (resp.\ $\psi_Y(P)$) is a word over $X \cup \HH$ (resp.\ $Y \cup \HH$) obtained by replacing every letter of $Y$ (resp.\ $X$) in $P \in (Y \cup \HH)^\ast$ (resp.\ $P \in (X \cup \HH)^\ast$) by a word over $X$ (resp.\ $Y$) representing the same element in $G$. Moreover, \cite[Theorem 2.34]{osin06def} says that $G$ satisfies a linear isoperimetric inequality with respect to $X$ if and only if $G$ satisfies it with respect to $Y$.

The definition below summarises common terms used to describe paths in ${\Cay(G,X \cup \HH)}$. The endpoints of paths in Cayley graphs that we consider will always be vertices.
\begin{defn}
Let $P$ be a path in the Cayley graph $\Cay(G,X \cup \HH)$. We say that \begin{enumerate}[(i)]
\item a subpath $Q$ of $P$ is an \emph{$H_\omega$-subpath} if it is labelled by a word from $(H_\omega)^\ast$, with a convention that a single vertex is an $H_\omega$-subpath for any $\omega \in \Omega$;
\item a subpath $Q$ of $P$ is an \emph{($H_\omega$-)component} if it is a maximal $H_\omega$-subpath, and a maximal $H_\omega$-subpath $Q$ is called a \emph{trivial ($H_\omega$-)component} if it is a single vertex;
\item a vertex $p$ of $P$ is \emph{non-phase} if it is an interior vertex of some component of $P$, and $p$ is \emph{phase} otherwise;
\item two $H_\omega$-components $Q_1$, $Q_2$ of paths $P_1$, $P_2$, respectively, in the graph $\Cay(G,X \cup \HH)$ are \emph{connected} if there is an edge from $(Q_1)_-$ to $(Q_2)_-$ labelled by an element of $H_\omega$;
\item an $H_\omega$-component $Q$ of $P$ is \emph{isolated} if it is not connected to any other $H_\omega$-component of $P$;
\item the path $P$ \emph{does not vertex backtrack} (resp.\ \emph{does not backtrack}) if all its components (resp.\ all its non-trivial components) are isolated, and \emph{vertex backtracks} (resp.\ \emph{backtracks}) otherwise.
\end{enumerate}
\end{defn}
It is clear that if $P$ is a geodesic in $\Cay(G,X \cup \HH)$, then $P$ does not vertex backtrack and all its vertices are phase.

We are interested in the collection $\mathcal{P}$ of parabolic elements of $G$.
\begin{defn}
An element $g \in G$ is \emph{parabolic} if it is conjugate to some element of $H_\omega$ for some $\omega \in \Omega$, and $g$ is \emph{hyperbolic} otherwise. We denote by $$\mathcal{P} := \bigcup_{\substack{\omega \in \Omega \\ g \in G}} H_\omega^g$$ the set of parabolic elements of $G$.
\end{defn}

We now recall some of the results about relatively hyperbolic groups which will be used in this paper. The first of them is a stronger version of the statement that the graph $\Cay(G,X \cup \HH)$ is Gromov-hyperbolic.

\begin{thm}[\cite{osin06def}, Theorem 3.26] \label{t:hyp}
There exists a constant $\nu \in \Z_{\geq 1}$ with the following property. Let $\Delta \subseteq \Cay(G,X \cup \HH)$ be a geodesic triangle with edges $P$, $Q$ and $R$, and let $p \in P$ be a vertex. Then there exists a vertex $q \in Q \cup R$ such that $$|p^{-1}q|_X \leq \nu.$$
\end{thm}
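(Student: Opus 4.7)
The plan is to combine two ingredients. The first is ordinary Gromov-hyperbolicity of $\Cay(G, X \cup \HH)$, which follows from the linear relative isoperimetric inequality by the usual area-of-quadrilaterals argument (fill in a geodesic quadrilateral by a van Kampen diagram, and use the linear area bound to force slim sides). This yields a constant $\delta$ such that for the given geodesic triangle $\Delta$ and vertex $p \in P$, there exists a vertex $q \in Q \cup R$ with $|p^{-1}q|_{X \cup \HH} \leq \delta$; fix a geodesic $S$ from $p$ to $q$ in $\Cay(G, X \cup \HH)$ of length at most $\delta$. The second ingredient, needed to upgrade $X \cup \HH$-closeness to $X$-closeness, is a bound on the $X$-length of isolated $\HH$-components in cycles.

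Specifically, I would first establish the auxiliary lemma: there is a constant $C > 0$ such that for any cycle $T$ in $\Cay(G, X \cup \HH)$ and any collection of pairwise-distinct isolated $\HH$-components $T_1, \dots, T_k$ of $T$, one has $\sum_i |\overline{T_i}|_X \leq C \cdot \ell_{X \cup \HH}(T)$. The proof uses the finite relative presentation: the label of $T$ represents the identity, so it may be written as a product of at most $C' \ell_{X \cup \HH}(T)$ conjugates of the finitely many relators. The long $H_\omega$-letter read along an isolated component can only be ``reduced to the identity'' via letters contributed by these relators, whose $X$-length is uniformly bounded; a careful book-keeping argument relating each isolated component to the relators that interact with it then yields the linear bound.

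Finally, I would form the closed path $T_0$ consisting of $S$, the subpath of $P$ from $p$ to a fixed corner of $\Delta$, and the subpath of $Q \cup R$ from that corner back to $q$. Each $\HH$-component of $S$ is either isolated in $T_0$, in which case its $X$-length is controlled by the auxiliary lemma, or else connected to a component of one of $P, Q, R$; in the latter case I would replace $q$ by the endpoint of that triangle-side component, which shortens $S$ by at least one edge. Since $\ell_{X \cup \HH}(S) \leq \delta$, iterating this reduction terminates in at most $\delta$ steps with every component of $S$ isolated, whence the auxiliary lemma gives $|p^{-1}q|_X \leq C \ell_{X \cup \HH}(T_0) \leq C'' \delta$, and we may take $\nu = \lceil C''\delta\rceil$. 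The main obstacle is the bookkeeping in this reduction: one must verify that after each redirection the modified $S$ is still a geodesic (or at least still short enough), that the new $q$ genuinely lies on $Q \cup R$, and that the geodesicity of $P, Q, R$ — which forces them not to vertex-backtrack — prevents the creation of new connected components that would obstruct termination.
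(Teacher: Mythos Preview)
This theorem is not proved in the paper: it is quoted from Osin \cite[Theorem~3.26]{osin06def} and used as a black box, so there is no ``paper's own proof'' to compare against. Your two ingredients --- hyperbolicity of $\Cay(G,X\cup\HH)$ and the linear bound on the total $X$-length of isolated components in a cycle (this is Osin's Lemma~2.27) --- are precisely the tools Osin develops, so the overall strategy is sound.

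However, the final combination has a genuine gap that is more serious than the bookkeeping issues you flag. Your cycle $T_0$ consists of $S$ together with subpaths of $P$ and of $Q\cup R$ running all the way to a corner of $\Delta$, so $\ell_{X\cup\HH}(T_0)$ depends on the size of the triangle and is \emph{not} bounded in terms of $\delta$ alone. The auxiliary lemma only gives $\sum_i |\overline{T_i}|_X \leq C\,\ell_{X\cup\HH}(T_0)$, so the conclusion ``$|p^{-1}q|_X \leq C\,\ell_{X\cup\HH}(T_0) \leq C''\delta$'' is unjustified: the redirection procedure shortens $S$, but it does nothing to shorten the two long triangle-side arcs in $T_0$. (There is also the problem you half-notice: a component of $S$ may be connected to a component of $P$ rather than of $Q\cup R$, and redirecting $q$ onto $P$ defeats the purpose; but even granting that this can be handled, the length issue remains.)

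Osin's actual argument avoids the two-step upgrade and works directly with a van Kampen diagram over the relative presentation filling the triangle, using the linear isoperimetric bound to control the diagram and extract a path from $p$ to $Q\cup R$ whose $X$-length is uniformly bounded. If you want to salvage your approach, you must replace $T_0$ by a cycle of \emph{uniformly bounded} $(X\cup\HH)$-length before invoking the isolated-components lemma; this requires additional uses of hyperbolicity (e.g.\ finding several nearby points on the other sides and building a short polygon from them) rather than a single corner-to-corner detour.
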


The second result introduces what is known as the \emph{bounded coset penetration (BCP) property}. For this, recall the following definition.

\begin{defn}
Let $(K,d)$ be a geodesic metric space, $\lambda \geq 1$, and $c \geq 0$. A path $\alpha: [0,\ell_\alpha] \to K$ parametrised by arc length is a \emph{$(\lambda,c)$-quasi-geodesic} in $K$ if $$|t_1-t_2| \leq \lambda d(\alpha(t_1),\alpha(t_2))+c$$ for all $t_1,t_2 \in [0,\ell_\alpha]$.
\end{defn}

\begin{thm}[\cite{osin06def}, Theorem 3.23] \label{t:bcp}
For any given $\lambda \geq 1$ and $c \geq 0$, there exists a constant $\varepsilon = \varepsilon(\lambda,c) \geq 0$ with the following property. Let $P$ and $Q$ be $(\lambda,c)$-quasi-geodesic paths in $\Cay(G,X \cup \HH)$ that do not backtrack such that $P_- = Q_-$ and $P_+ = Q_+$. Then \begin{enumerate}[(i)]
\item \label{i:t-bcp-near} If $p \in P$ is a phase vertex, then there exists a phase vertex $q \in Q$ such that $|p^{-1}q|_X \leq \varepsilon$.
\item \label{i:t-bcp-conn}	If $R$ is a non-trivial $H_\omega$-component of $P$ and $|R_-^{-1}R_+|_X > \varepsilon$, then there exists a non-trivial $H_\omega$-component of $Q$ that is connected to $R$.
\item \label{i:t-bcp-ste} If $R \subseteq P$ and $S \subseteq Q$ are connected non-trivial $H_\omega$-components, then $$\max \{ |R_-^{-1}S_-|_X, |R_+^{-1}S_+|_X \} \leq \varepsilon.$$
\end{enumerate}
\end{thm}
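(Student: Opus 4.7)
The plan is to reduce the quasi-geodesic statement to the geodesic case and then establish the geodesic case directly from the relative linear isoperimetric inequality together with the hyperbolicity statement of Theorem \ref{t:hyp}.

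First I would prove the geodesic version: if $P$ and $Q$ are geodesics in $\Cay(G,X\cup\HH)$ with $P_-=Q_-$ and $P_+=Q_+$, then the concatenation $P \cdot Q^{-1}$ is a closed loop. Invoking the relative linear isoperimetric inequality, this loop bounds a relative van Kampen diagram $\Delta$ of area linear in $\ell_{X\cup\HH}(P)+\ell_{X\cup\HH}(Q)$. The crucial structural point is that the $\HH$-edges of $\partial\Delta$ are organised into $H_\omega$-faces; two boundary components are ``connected'' in the sense of the definition precisely when they share such a face. A standard analysis of $\Delta$ (peeling off $H_\omega$-faces and $\mathcal{R}$-cells) gives constants controlling (i) how far a phase vertex on $P$ can be from the nearest phase vertex on $Q$ in the $X$-metric, and (ii),(iii) what happens to a non-trivial $H_\omega$-component of $P$: either it is connected to a component of $Q$, or its endpoints are joined by a short path in the $X$-generators inside $\Delta$, which by geodesicity forces $|R_-^{-1}R_+|_X$ to be bounded.

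Second, I would bootstrap to the quasi-geodesic case by a Morse-type stability result adapted to the relative setting: any non-backtracking $(\lambda,c)$-quasi-geodesic $P$ stays in a bounded $|\cdot|_X$-neighbourhood of any geodesic $P'$ with the same endpoints, with the bound depending only on $\lambda,c$, and this correspondence is component-respecting in the following sense: each non-trivial component of $P$ whose endpoints are far apart in $|\cdot|_X$ corresponds to a component of $P'$ labelled over the same $H_\omega$, and vice versa. This is proved by approximating $P$ by a sequence of geodesic subsegments, using Theorem \ref{t:hyp} to control the resulting polygon, and exploiting non-backtracking to rule out the pathology of $P$ ``shadowing'' an $H_\omega$-coset that $P'$ does not enter.

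Finally, given non-backtracking $(\lambda,c)$-quasi-geodesics $P,Q$ with common endpoints, I would fix geodesic approximants $P',Q'$ between those endpoints and chain the correspondences $P \rightsquigarrow P' \rightsquigarrow Q' \rightsquigarrow Q$: conclusion (i) follows immediately, while (ii) and (iii) follow by transporting ``deep'' components along this chain, absorbing the additive stability constants into a larger $\varepsilon=\varepsilon(\lambda,c)$.

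The main obstacle is the component-respecting stability in the second step. Generic Morse stability in a Gromov-hyperbolic graph only gives coarse closeness, which is not enough to match up $H_\omega$-components: a priori a quasi-geodesic could replace a deep component of the geodesic with a long detour in the $X$-generators. Ruling this out requires the non-backtracking hypothesis together with a careful isoperimetric argument on the loop formed by the detour and the component, ensuring that a deep component of one path forces a corresponding deep component of the other.
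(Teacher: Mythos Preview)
This theorem is not proved in the paper at all: it is quoted verbatim from Osin's monograph \cite{osin06def} (Theorem~3.23 there) and used as a black box throughout Sections~\ref{s:geod}--\ref{s:cor}. There is therefore nothing to compare your proposal against in this paper.

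For what it is worth, your outline is broadly in the spirit of how Osin actually establishes the result, but the paper you are reading neither reproduces nor sketches that argument. If your goal is to supply an independent proof, the weak point you yourself flag---the component-respecting stability in step two---is genuine: ordinary Morse stability in $\Cay(G,X\cup\HH)$ gives closeness in the $X\cup\HH$-metric, not the $X$-metric, and upgrading this requires exactly the kind of diagram surgery (controlling isolated components via the relative isoperimetric inequality, as in Osin's Lemma~2.27 and Proposition~3.2) that you gesture at but do not spell out. Without that lemma made precise, steps (ii) and (iii) of your chain $P\rightsquigarrow P'\rightsquigarrow Q'\rightsquigarrow Q$ do not go through, because a component that is ``deep'' in the $X$-metric on $P$ could a priori be absorbed into several shallow components on $P'$ unless you have already bounded the $X$-distance between connected components uniformly.
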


The following (perhaps less standard) result allows us to enlarge an arbitrary finite generating set $Y$ of $G$ to a ``nicer'' set $X$ such that geodesics in $\Cay(G,X)$ can be related to quasi-geodesics in $\Cay(G,X \cup \HH)$. For this we need to construct derived paths, defined by by Antol\'in \& Ciobanu in \cite[Construction 4.1]{ac} (to avoid unnecessary complications, we will only define this for geodesics).

\begin{defn}
Let $X$ be a finite generating set for $G$, let $P$ be a geodesic path in $\Cay(G,X)$. We can (uniquelly) express $P$ as a concatenation $$P = A_0U_1A_1\cdots U_nA_n,$$ where the $U_i$ are labelled by non-trivial words in $(H_{\omega_i})^\ast$ for some $\omega_i \in \Omega$, and no $U_i$ is a proper subpath of a subpath $Q$ of $P$ such that $(U_i)_- = Q_-$ and $Q$ is labelled by a word in some $(H_\omega)^\ast$.
\begin{enumerate}[(i)]
\item The \emph{derived path} $\widehat{P}$ of $P$ is a path in $\Cay(G,X \cup \HH)$ given by $$\widehat{P} := A_0h_1A_1 \cdots h_nA_n,$$ where $h_n$ is an edge labelled by an element of $H_{\omega_i}$ such that $\overline{h_i} = \overline{U_i}$ in $G$. 
\item We call a vertex $p \in P$ a \emph{phase vertex} of $P$ if it is not an interior vertex of any of the $U_i$ (and so ``survives'' in $\widehat{P}$).
\end{enumerate}
\end{defn}

\begin{thm}[\cite{ac}, Lemma 5.3] \label{t:gsl}
Let $Y$ be an arbitrary generating set for $G$. Then there exist $\lambda \geq 1$, $c \geq 0$ and a finite subset $\HH'$ of $\HH$ such that for every finite subset $X$ of $G$ satisfying $$Y \cup \HH' \subseteq X \subseteq Y \cup \HH$$ and for any geodesic path $P$ in $\Cay(G,X)$, the derived path $\widehat{P}$ in $\Cay(G,X \cup \HH)$ is a $(\lambda,c)$-quasi-geodesic that does not vertex backtrack.
\end{thm}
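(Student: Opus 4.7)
The plan is to prove both conclusions—quasi-geodesicity and non-vertex-backtracking of $\widehat{P}$—by contradiction with the $X$-geodesicity of $P$. First I would fix constants $K$, $\lambda$, $c$ depending on the thin-triangle constant $\nu$ from Theorem~\ref{t:hyp} and the BCP constant $\varepsilon = \varepsilon(\lambda_0,c_0)$ from Theorem~\ref{t:bcp} applied to some auxiliary $(\lambda_0,c_0)$. Then I would take $\HH'$ to be the finite subset of $\HH$ consisting of all elements of $Y$-length at most $K$ (finite since $\Omega$ is finite in Osin's setup, and each $H_\omega \cap B_{G,Y}(K)$ is finite). This choice guarantees that any ``short'' $H_\omega$-element lies in $X$, which is critical for both arguments, and ensures that all the chosen constants are independent of the intermediate choice of $X$.

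For non-vertex-backtracking, suppose $\widehat{P}$ has two connected $H_\omega$-components $h_i$, $h_j$ with $i < j$. Then the product $g := \overline{U_i A_i U_{i+1} \cdots U_j} \in G$ lies in $H_\omega$. The goal is to bound $|g|_Y$ by a function of $\nu$ and $\varepsilon$: comparing the $X$-geodesic subpath $U_i A_i \cdots U_j$ of $P$ with an auxiliary $(\lambda_0,c_0)$-quasi-geodesic in $\Cay(G, X\cup\HH)$ joining its endpoints, and invoking both Theorems~\ref{t:hyp} and~\ref{t:bcp}, produces such a bound. With $K$ chosen accordingly, $g \in \HH' \subseteq X$, and so $g$ is a single $X$-edge. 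Replacing the subpath $U_i A_i \cdots U_j$ of $P$ by this edge then yields a strictly shorter $X$-path between $P_-$ and $P_+$, contradicting the $X$-geodesicity of $P$.

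For the quasi-geodesic property, let $p, q$ be phase vertices of $\widehat{P}$, let $S = \widehat{P}|_{[p,q]}$, and let $R$ be an $(X\cup\HH)$-geodesic from $p$ to $q$; I need $\ell(S) \leq \lambda|R| + c$. The strategy is to triangulate the bigon $S \cup R^{-1}$ into geodesic triangles and iteratively apply Theorem~\ref{t:hyp}, deducing that every phase vertex of $S$ lies within $X$-distance $\nu$ of some vertex of $R$. A pigeonhole count then shows: if $\ell(S)$ were much larger than $|R|$, then two distinct phase vertices $p', q'$ of $S$, separated by a long segment of $S$ (and hence of $P$), would be $X$-close to the same vertex of $R$, and hence $X$-close to each other. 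Since $P$ is $X$-geodesic, the $X$-distance between $p'$ and $q'$ equals the length of the $P$-subpath between them, a contradiction. Once this is established, the already-proved non-vertex-backtracking ensures that $\widehat{P}$ behaves well as a quasi-geodesic.

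The main obstacle is the quasi-geodesic bound. Unlike the backtracking argument, it cannot directly appeal to Theorem~\ref{t:bcp} (which itself presupposes quasi-geodesicity), so it requires a self-contained thin-triangle/pigeonhole analysis, with uniformity of the constants over all admissible $X$ ensured by fixing $\HH'$ once and for all. A bootstrap structure—first establishing non-vertex-backtracking, then using it to clean up the component structure of $\widehat{P}$ before running the pigeonhole argument—makes the two parts genuinely intertwined rather than independent.
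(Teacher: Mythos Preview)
The paper does not prove this theorem; it is quoted from Antol\'in--Ciobanu \cite{ac} (their Lemma~5.3) as a black box and used without argument, so there is no in-paper proof to compare your proposal against.

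On your sketch itself: the non-vertex-backtracking step as written has a circularity. You propose to bound $|g|_Y$ by invoking Theorem~\ref{t:bcp}, but BCP applies only to paths already known to be quasi-geodesics that do not backtrack, and at this stage you have established neither property for $\widehat{P}$ or its subpaths. The ``auxiliary $(\lambda_0,c_0)$-quasi-geodesic'' you mention is never specified; if it is meant to be the single $H_\omega$-edge joining the two connected components, that is indeed a geodesic, but the other side of the bigon is precisely the segment of $\widehat{P}$ whose properties are in question, so Theorem~\ref{t:bcp} does not apply to the pair. More fundamentally, there is no a priori reason for $|g|_Y$ to be bounded at all: the element $g$ connecting two backtracking components can be an arbitrary element of $H_\omega$, and the contradiction must come from a different mechanism than ``$g$ is short, hence $g\in\HH'\subseteq X$''. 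Similarly, in the quasi-geodesic argument you triangulate $S\cup R^{-1}$ and invoke Theorem~\ref{t:hyp}, but that theorem is about \emph{geodesic} triangles in $\Cay(G,X\cup\HH)$, and $S$ is not a geodesic there; one can salvage this by subdividing $S$ at phase vertices and chaining genuine geodesic triangles, but then the resulting proximity is in the $(X\cup\HH)$-metric and the pigeonhole contradiction with $X$-geodesicity of $P$ needs an additional conversion step you have not supplied. The proposal is a plausible outline, but as written several of the key implications do not go through.
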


A generating set $X$ satisfying the conclusion of Theorem \ref{t:gsl} will be called a \emph{well-behaved} generating set.

Finally, we have the following finiteness results.

\begin{thm}[\cite{osin06def}, Corollary 2.48] \label{t:finmanypar}
If $G$ is finitely generated, then we have $|\Omega| < \infty$.
\end{thm}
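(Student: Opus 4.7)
The plan is to exploit only the finite relative presentation of $G$ together with the hypothesis that $G$ is absolutely finitely generated; relative hyperbolicity plays no further role. Fix the finite subset $X \subseteq G$ and the finite set $\mathcal{R} \subseteq F$ provided by the relative finite presentation, and let $\Omega_{\mathcal{R}} \subseteq \Omega$ be the finite set of indices $\omega$ such that the free-product expression of some $r \in \mathcal{R}$ contains a syllable from $H_\omega$. Fix also an absolute finite generating set $Y$ of $G$. Each $y \in Y$ equals $\varphi$ of some word over $X \cup \HH$, and these finitely many expressions collectively involve letters from $H_\omega$ for only finitely many $\omega$; call that finite set $\Omega_Y$. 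Put $\Omega_0 := \Omega_{\mathcal{R}} \cup \Omega_Y$. The goal is to show $\Omega = \Omega_0$.

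The main tool is the retraction $\rho: F \to F_0 := \bigl(\ast_{\omega \in \Omega_0} H_\omega\bigr) \ast F(X)$, defined via the universal property of the free product by sending $H_\omega$ to $\{1\}$ for $\omega \notin \Omega_0$ and acting as the identity on all remaining free factors. By construction $\mathcal{R} \subseteq F_0$, so $\rho$ fixes $\mathcal{R}$ pointwise; applying $\rho$ term-by-term to any product of the form $\prod_i f_i^{-1} r_i^{\pm 1} f_i$ then shows $\rho\bigl(\langle\!\langle \mathcal{R} \rangle\!\rangle^F\bigr) \subseteq \langle\!\langle \mathcal{R} \rangle\!\rangle^{F_0}$. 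Moreover, the restriction $\varphi|_{F_0}: F_0 \to G$ is surjective because $Y \subseteq \varphi(F_0)$ by choice of $\Omega_Y$, and $\langle\!\langle \mathcal{R} \rangle\!\rangle^{F_0} \subseteq \ker \varphi|_{F_0}$ since $\varphi(r) = 1$ for every $r \in \mathcal{R}$.

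Now suppose towards a contradiction that there is some $\omega \in \Omega \setminus \Omega_0$, and pick any $h \in H_\omega$. Viewing $h$ as an element of $F$, use surjectivity of $\varphi|_{F_0}$ to choose $u \in F_0$ with $\varphi(u) = \varphi(h)$; then $hu^{-1} \in \ker \varphi = \langle\!\langle \mathcal{R} \rangle\!\rangle^F$. Applying $\rho$ gives $u^{-1} = \rho(h)\rho(u)^{-1} = \rho(hu^{-1}) \in \langle\!\langle \mathcal{R} \rangle\!\rangle^{F_0} \subseteq \ker \varphi$, so $\varphi(u) = 1$ and hence $\varphi(h) = 1$. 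Since $H_\omega$ sits inside $G$ as an honest subgroup (i.e.\ $\varphi|_{H_\omega}$ is the inclusion), this forces $h = 1$. Hence $H_\omega = \{1\}$ for every $\omega \notin \Omega_0$; in the fixed setting all peripherals are proper, and any trivial ones may be harmlessly discarded from the collection, so $\Omega$ reduces to a subset of the finite set $\Omega_0$, proving $|\Omega| < \infty$.

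The only delicate point is the containment $\rho\bigl(\langle\!\langle \mathcal{R} \rangle\!\rangle^F\bigr) \subseteq \langle\!\langle \mathcal{R} \rangle\!\rangle^{F_0}$, which is precisely where the condition $\Omega_0 \supseteq \Omega_{\mathcal{R}}$ is essential: it guarantees $\rho(r) = r$ for each relator, so that $\rho\bigl(\prod_i f_i^{-1} r_i^{\pm 1} f_i\bigr) = \prod_i \rho(f_i)^{-1} r_i^{\pm 1} \rho(f_i)$, a product that literally lies in the normal closure of $\mathcal{R}$ inside $F_0$ because each $\rho(f_i) \in F_0$. Everything else is routine free-product bookkeeping, and the symmetry between $\Omega_Y$ (to make $\varphi|_{F_0}$ surjective) and $\Omega_{\mathcal{R}}$ (to make $\rho$ respect the kernel) is what allows the two finiteness inputs to combine.
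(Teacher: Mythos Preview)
The paper does not supply its own proof of this statement; it is quoted verbatim as Corollary~2.48 of Osin's monograph \cite{osin06def} and used as a black box. Your argument is correct and is in fact the standard one (essentially Osin's): exploit only the \emph{finite relative presentation} together with absolute finite generation, build the retraction $\rho$ onto the subproduct $F_0$ supported on the finitely many indices seen by $\mathcal{R}$ and by a lift of an absolute generating set, and conclude that every $H_\omega$ with $\omega\notin\Omega_0$ is trivial.

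One small remark on the final sentence. What your argument literally proves is that $H_\omega=\{1\}$ for every $\omega\notin\Omega_0$; this is formally weaker than $|\Omega|<\infty$ if one allows trivial peripheral subgroups indexed by an arbitrary set. Your parenthetical ``any trivial ones may be harmlessly discarded'' is the right fix, and it is genuinely harmless for every use of the result in this paper: trivial peripherals contribute nothing to $\HH$ or to $\PP$, so one may replace $\Omega$ by the finite set $\Omega_0$ throughout without affecting any statement. If you want the conclusion to match the theorem as phrased, simply note that relative hyperbolicity (indeed, relative finite presentability) is preserved under deleting trivial peripheral subgroups, so after discarding them the index set is literally finite.
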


\begin{thm}[\cite{osin06def}, Proposition 2.36] \label{t:almaln}
If $H_\omega \cap H_{\widetilde\omega}^g$ is infinite for some $\omega,\widetilde\omega \in \Omega$ and $g \in G$, then $\widetilde\omega = \omega$ and $g \in H_\omega$.
\end{thm}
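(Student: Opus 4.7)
The plan is to argue by contradiction using the bounded coset penetration property (Theorem~\ref{t:bcp}). Assume $H_\omega \cap H_{\tilde\omega}^g$ is infinite, and pick an infinite sequence of distinct elements $h_n \in H_\omega \cap H_{\tilde\omega}^g$; since $X$-balls in $G$ are finite, $|h_n|_X \to \infty$. Write $h_n = g k_n g^{-1}$ with $k_n \in H_{\tilde\omega}$, and fix a geodesic $\alpha$ in $\Cay(G, X \cup \HH)$ from $1$ to $g$ of length $L := |g|_{X \cup \HH}$ (with, if needed, its first edge relabelled as an $H_\omega$-edge when the first vertex after $1$ lies in $H_\omega$, which does not change that $\alpha$ is a geodesic).

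For each $n$, form the two paths from $1$ to $h_n g$ given by
\begin{equation*}
P_n := f_n \cdot (h_n \alpha), \qquad Q_n := \alpha \cdot e_n,
\end{equation*}
where $f_n$ is the $H_\omega$-edge labelled $h_n$ from $1$ to $h_n$ and $e_n$ is the $H_{\tilde\omega}$-edge labelled $k_n$ from $g$ to $h_n g$. Each has length $L + 1$, and since $|h_n g|_{X \cup \HH} \geq L - 1$ (from $h_n g = g k_n$ with $|k_n|_{X \cup \HH} \leq 1$), both are $(1, 2)$-quasi-geodesics with constants independent of $n$. I would then verify that $P_n$ and $Q_n$ do not backtrack: any backtrack of $P_n$ would pair $f_n$ with a later non-trivial $H_\omega$-component $h_n D$ of $h_n \alpha$, forcing $D_- \in H_\omega$ and hence $|D_-|_{X \cup \HH} \leq 1$, which together with the labelling convention on $\alpha$'s first edge contradicts both the position of $D_-$ along the geodesic $\alpha$ and the maximality of its components; a parallel calculation at the terminal end handles $Q_n$.

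With no-backtracking in hand, Theorem~\ref{t:bcp}\eqref{i:t-bcp-conn} applies with $\varepsilon := \varepsilon(1, 2)$: for $n$ so large that $|h_n|_X > \varepsilon$, the non-trivial $H_\omega$-component $f_n$ of $P_n$ is connected to some non-trivial $H_\omega$-component $C_n$ of $Q_n$. Either $C_n$ lies inside $\alpha$, in which case Theorem~\ref{t:bcp}\eqref{i:t-bcp-ste} gives $|h_n^{-1}(C_n)_+|_X \leq \varepsilon$, confining $h_n$ to a finite set (since $\alpha$ has only finitely many components) and contradicting $|h_n|_X \to \infty$; or $C_n = e_n$, which forces $\omega = \tilde\omega$, and then the connecting $H_\omega$-edge from $(f_n)_- = 1$ to $(e_n)_- = g$ witnesses $g \in H_\omega$, as required. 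I expect the no-backtracking verification to be the main technical step, since it requires carefully correlating the component structure of $\alpha$ with the positional constraint $|D_-|_{X \cup \HH} \leq 1$ imposed by a would-be backtrack.
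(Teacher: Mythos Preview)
The paper does not give a proof of this statement at all: it is quoted from \cite[Proposition~2.36]{osin06def} and used as a black box, so there is no proof in the paper to compare against.

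Your BCP-based argument is a sensible direct approach and the overall strategy works, but two points need tightening. First, your relabelling convention only treats the \emph{first} edge of $\alpha$; the ``parallel calculation'' for $Q_n$ requires the symmetric move at the terminal end, relabelling the last edge of $\alpha$ as an $H_{\tilde\omega}$-edge when the penultimate vertex lies in $gH_{\tilde\omega}$. Without this, an $H_{\tilde\omega}$-component of $\alpha$ ending at $g$ could connect to $e_n$ and cause $Q_n$ to backtrack. Second, your final dichotomy is not quite exhaustive: the connected $H_\omega$-component $C_n$ of $Q_n$ need not be \emph{equal} to $e_n$---when $\omega=\tilde\omega$ it may be a longer subpath containing $e_n$ together with a terminal segment of $\alpha$. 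In that case $(C_n)_-\neq g$, so the connecting edge goes from $1$ to $(C_n)_-$ rather than to $g$, and you need one extra line (e.g.\ $(C_n)_-\in H_\omega$ together with $(C_n)_-^{-1}g\in H_\omega$, since $g$ is a vertex of the $H_\omega$-path $C_n$) to conclude $g\in H_\omega$.

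One external caution: in Osin's monograph Proposition~2.36 appears in Chapter~2 while the BCP theorem is Theorem~3.23 in Chapter~3, so if you intend this as a self-contained reproof you should check that Osin's proof of BCP does not itself invoke Proposition~2.36. Within the present paper this is not an issue, since both results are imported as citations.
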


\section{Geodesics in Cayley graphs}
\label{s:geod}

Combining Theorems \ref{t:hyp}, \ref{t:bcp}\eqref{i:t-bcp-near} and \ref{t:gsl} it is easy to see that we have the following result. In particular, we may take $\tilde\delta := \nu + 2\varepsilon(\lambda,c)$, where $\nu$ and $(\lambda,c)$ are given by Theorems \ref{t:hyp} and \ref{t:gsl}, respectively, and $\varepsilon(\lambda,c)$ is given by Theorem \ref{t:bcp}.

\begin{cor} \label{c:hyp}
Let $X$ be a well-behaved generating set of $G$. There exists a constant $\tilde\delta \geq 0$ with the following property. Consider a geodesic triangle in $\Cay(G,X)$ formed by edges $P$, $Q$ and $R$, and let $p$ be a phase vertex of $P$. Then there exists a phase vertex $q$ of either $Q$ or $R$ such that $$|p^{-1}q|_X \leq \tilde\delta. \eqno\qed$$
\end{cor}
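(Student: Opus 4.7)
The plan is to transfer the geodesic triangle from $\Cay(G,X)$ to the coned-off graph $\Cay(G,X\cup\HH)$ via derived paths, apply the Gromov-style thinness result (Theorem~\ref{t:hyp}) there, and then pull back the resulting witness using the BCP property (Theorem~\ref{t:bcp}).

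First I would form the derived paths $\widehat P$, $\widehat Q$, $\widehat R$ in $\Cay(G,X\cup\HH)$; since $X$ is well-behaved, Theorem~\ref{t:gsl} guarantees that each is a $(\lambda,c)$-quasi-geodesic that does not vertex backtrack (and in particular does not backtrack). Then I would pick geodesics $P',Q',R'$ in $\Cay(G,X\cup\HH)$ sharing the endpoints of $\widehat P,\widehat Q,\widehat R$ respectively; since the original triangle closes up, so do the $P',Q',R'$, giving a genuine geodesic triangle in $\Cay(G,X\cup\HH)$ to which Theorem~\ref{t:hyp} applies.

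Given a phase vertex $p$ of $P$, it is by definition a vertex of $\widehat P$. The key bookkeeping point is that $p$ is also a \emph{phase} vertex of $\widehat P$ viewed as a path in $\Cay(G,X\cup\HH)$: by uniqueness of the decomposition $P=A_0U_1A_1\cdots U_nA_n$ the $U_i$ are the inclusion-maximal $H$-subpaths of $P$, so no $A_i$ contains an $H$-edge, and hence every component of $\widehat P$ is a single replacement edge $h_i$ with no interior vertex. Now apply Theorem~\ref{t:bcp}\eqref{i:t-bcp-near} to the pair $(\widehat P,P')$ to obtain a phase vertex $p'\in P'$ with $|p^{-1}p'|_X\leq\varepsilon(\lambda,c)$; apply Theorem~\ref{t:hyp} to the geodesic triangle $P',Q',R'$ to find a vertex $q'$ on one of $Q',R'$, say $Q'$, with $|(p')^{-1}q'|_X\leq\nu$, necessarily phase since $Q'$ is a geodesic; and apply Theorem~\ref{t:bcp}\eqref{i:t-bcp-near} once more, with the roles of the two quasi-geodesics swapped, to $(Q',\widehat Q)$ to obtain a phase vertex $q\in\widehat Q$ with $|(q')^{-1}q|_X\leq\varepsilon(\lambda,c)$. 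Being a vertex of $\widehat Q$, the point $q$ is also a phase vertex of $Q$, and the triangle inequality in $\Cay(G,X)$ yields $|p^{-1}q|_X\leq\nu+2\varepsilon(\lambda,c)$, so one takes $\tilde\delta:=\nu+2\varepsilon(\lambda,c)$ as suggested.

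The only delicate point is the correspondence between phase vertices of $P$ and phase vertices of $\widehat P$, which rests on well-behavedness of $X$ (so that $\widehat P$ does not vertex backtrack and its components reduce to the single edges $h_i$); everything else is a mechanical composition of the three cited theorems.
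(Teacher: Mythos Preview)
Your proposal is correct and follows exactly the approach the paper indicates: pass to derived paths via Theorem~\ref{t:gsl}, replace them by geodesics using Theorem~\ref{t:bcp}\eqref{i:t-bcp-near}, apply Theorem~\ref{t:hyp} to the resulting geodesic triangle in $\Cay(G,X\cup\HH)$, and transfer back with Theorem~\ref{t:bcp}\eqref{i:t-bcp-near} again, obtaining precisely $\tilde\delta=\nu+2\varepsilon(\lambda,c)$. The paper gives no more detail than the sentence preceding the corollary, and your write-up is a faithful expansion of it.
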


This implies that for a path $P$ in $\Cay(G,X)$ that is ``not too long'', phase vertices of the geodesic with endpoints $P_-$, $P_+$ are ``not too far'' from $P$.

\begin{prop} \label{p:logclose}
Let $X$ be a well-behaved generating set of $G$, and let $P$ be a path in $\Cay(G,X)$. Let $Q$ be a geodesic in $\Cay(G,X)$ with endpoints $Q_- =P_-$, $Q_+=P_+$, and let $q$ be a phase vertex of $Q$. Then there exists a vertex $p$ of $P$ such that $$|p^{-1}q|_X \leq \tilde\delta \left\lceil \log_2(\ell_X(P)) \right\rceil$$ for a universal constant $\tilde\delta \geq 0$.
\end{prop}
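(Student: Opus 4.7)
The plan is to induct on $n = \lceil \log_2 \ell_X(P) \rceil$, repeatedly bisecting $P$ and using Corollary \ref{c:hyp} at each level of the bisection. The constant $\tilde\delta$ will be the one supplied by that corollary. The overall idea mirrors the standard proof that, in a Gromov-hyperbolic space, points on a side of an $N$-gon with geodesic sides lie within $O(\log N)$ of some vertex; here we just need the relatively hyperbolic refinement that the approximating point on a geodesic triangle can be taken to be a phase vertex.

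For the base case $n = 0$, the path $P$ has length $0$ or $1$, so $Q$ is trivial or a single edge; the only phase vertices of $Q$ are $P_-$ and $P_+$, and one may take $p$ to be the corresponding endpoint of $P$, giving $|p^{-1}q|_X = 0$. For the inductive step, suppose the statement holds for all paths of length at most $N/2$ (in the sense that $\lceil \log_2 \ell_X(\cdot) \rceil \leq n-1$). Given $P$ with $\ell_X(P) = N$, write $P = P_1 P_2$ as a concatenation with $\ell_X(P_i) \leq \lceil N/2 \rceil$ for $i = 1,2$; a short check confirms $\lceil \log_2 \lceil N/2 \rceil \rceil \leq n-1$. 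Let $m := (P_1)_+ = (P_2)_-$, and pick geodesics $Q'$ from $P_-$ to $m$ and $Q''$ from $m$ to $P_+$ in $\Cay(G,X)$.

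Now apply Corollary \ref{c:hyp} to the geodesic triangle with sides $Q$, $Q'$, $Q''$ and the phase vertex $q \in Q$: there is a phase vertex $q'$ of either $Q'$ or $Q''$ with $|q^{-1}q'|_X \leq \tilde\delta$. Without loss of generality $q' \in Q'$. Since $P_1$ is a path with the same endpoints as $Q'$ and $\lceil \log_2 \ell_X(P_1) \rceil \leq n - 1$, the inductive hypothesis applied to $P_1$, $Q'$, and $q'$ yields a vertex $p$ of $P_1 \subseteq P$ with $|p^{-1}q'|_X \leq \tilde\delta(n-1)$. The triangle inequality then gives $|p^{-1}q|_X \leq \tilde\delta n$, completing the induction.

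There is no real obstacle here beyond bookkeeping: the only delicate point is verifying that the vertex $q'$ produced by Corollary \ref{c:hyp} is a \emph{phase} vertex of $Q'$ or $Q''$ (so that the inductive hypothesis may be invoked), which is exactly what that corollary guarantees, and that the bisection of $P$ really does reduce $n$ by one. Both follow from elementary checks.
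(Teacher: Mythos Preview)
Your proof is correct and is essentially the same argument as the paper's: both repeatedly bisect $P$, apply Corollary \ref{c:hyp} to the resulting geodesic triangles to track a phase vertex through each level, and sum the $\tilde\delta$ contributions over $\lceil \log_2 \ell_X(P) \rceil$ levels. The only cosmetic difference is that you phrase it as an induction on $n=\lceil \log_2 \ell_X(P)\rceil$, whereas the paper unrolls the recursion explicitly via a binary-string indexing of the subdivision geodesics.
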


\begin{proof}
Let $\tilde\delta \geq 0$ be the constant given by Corollary \ref{c:hyp}, and let $$s := \left\lceil \log_2(\ell_X(P)) \right\rceil.$$ The proof resembles one that proves that geodesics in a hyperbolic metric space diverge exponentially, see e.g.\ \cite[Lemma 7.1.A]{gromov}.

We start with the geodesic $Q$ and, for $b$ a binary string of length $\leq s$, define the geodesics $Q_b$ as follows. Suppose that the geodesic $Q_b$ has been defined for a binary string $b$ of length $< s$. Let $m_b$ be a vertex on $P$ such that $$\left| \ell_X(P_{b0})-\ell_X(P_{b1}) \right| \leq 1$$ where $P_{b0}$ (resp.\ $P_{b1}$) is a subpath of $P$ with endpoints $(Q_b)_-$ and $m_b$ (resp.\ $m_b$ and $(Q_b)_+$). Then we define $Q_{b0}$ (resp.\ $Q_{b1}$) to be a geodesic with endpoints $(Q_b)_-$ and $m_b$ (resp.\ $m_b$ and $(Q_b)_+$). Note that if $b$ has length $s$, then $\ell_X(Q_b) \leq 1$ and so $Q_b$ is a subpath of $P$.

Now let $q = q_0$ be a phase vertex of $Q$, and construct phase vertices $q_i$ of $Q_{b(i)}$, for $1 \leq i \leq s$ and $b(i)$ a binary string of length $i$, as follows. Suppose $q_j$ and $b(j)$ have been chosen for $0 \leq j \leq i$, for some $i < s$. Consider the geodesic triangle formed by edges $Q_{b(i)}$, $Q_{b(i)0}$ and $Q_{b(i)1}$. Then by Corollary \ref{c:hyp}, for some $c \in \{ 0,1 \}$ there exists a phase vertex $q_{i+1}$ of $Q_{b(i+1)}$, where $b(i+1) = b(i)c$, such that $|q_i^{-1} q_{i+1}|_X \leq \tilde\delta$.

Finally, $Q_{b(s)}$ is a subpath of $P$, so in particular $q_s \in P$, and we get $$|q^{-1}q_s|_X \leq \sum_{i=0}^{s-1} |q_i^{-1}q_{i+1}|_X \leq \tilde\delta s,$$ as required.
\end{proof}

In particular, as a corollary we obtain the following result.

\begin{cor} \label{c:logclose}
Let $Y$ be any finite generating set for $G$. Then there exist constants $\tilde\lambda \geq 0$ and $\tilde{c} \geq 0$ such that the following holds. Let $P$ be a geodesic path in $\Cay(G,Y)$, and let $Q$ be a geodesic path in $\Cay(G,Y \cup \HH)$ such that $Q_- = \widehat{P}_-$ and $Q_+ = \widehat{P}_+$. Then for any vertex $q$ of $Q$, there exists a vertex $p$ of $P$ such that $$|p^{-1}q|_Y \leq \tilde\lambda \log_2(\ell_Y(P)) + \tilde{c}.$$
\end{cor}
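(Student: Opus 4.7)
The plan is to enlarge $Y$ to a well-behaved generating set so that Proposition \ref{p:logclose} becomes available in the locally-compact Cayley graph, and then to use the BCP property to compare the given geodesic $Q$ in the coned-off graph with the derived path of a geodesic in the enlarged Cayley graph.

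First, I apply Theorem \ref{t:gsl} to $Y$ to produce a well-behaved finite generating set $X$ with $Y \subseteq X \subseteq Y \cup \HH$; in particular $\Cay(G, X \cup \HH) = \Cay(G, Y \cup \HH)$. Since $Y \subseteq X$, the geodesic $P$ in $\Cay(G,Y)$ is also a path in $\Cay(G,X)$ with the same vertex sequence and $\ell_X(P) \leq \ell_Y(P)$. Let $P'$ be a geodesic in $\Cay(G,X)$ sharing endpoints with $P$. Then by Theorem \ref{t:gsl} the derived path $\widehat{P'}$ is a $(\lambda,c)$-quasi-geodesic in $\Cay(G, X \cup \HH)$ that does not vertex backtrack (in particular, does not backtrack), with the same endpoints as $Q$.

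Next, I apply Theorem \ref{t:bcp}\eqref{i:t-bcp-near} to the pair $(Q, \widehat{P'})$ in $\Cay(G, Y \cup \HH)$: $Q$ is a geodesic, hence a $(1,0)$-quasi-geodesic that does not backtrack (two connected non-trivial $H_\omega$-components of a geodesic could be collapsed to a single $H_\omega$-edge, contradicting minimality). For any vertex $q$ of $Q$ (automatically a phase vertex), this yields a phase vertex $q'$ of $\widehat{P'}$ with $|q^{-1}q'|_X \leq \varepsilon := \varepsilon(\lambda,c)$, and by construction of the derived path $q'$ corresponds to a phase vertex of $P'$ in $\Cay(G,X)$. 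Proposition \ref{p:logclose}, applied to $P$ viewed as a path in $\Cay(G,X)$ and to the geodesic $P'$, then provides a vertex $p$ of $P$ with
\[
|p^{-1}q'|_X \leq \tilde\delta \lceil \log_2 \ell_X(P) \rceil \leq \tilde\delta (\log_2 \ell_Y(P) + 1).
\]

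Combining via the triangle inequality gives $|p^{-1}q|_X \leq \tilde\delta \log_2 \ell_Y(P) + (\tilde\delta + \varepsilon)$, and converting to the $Y$-metric via the Lipschitz constant $M := \max_{x \in X} |x|_Y$ (which exists because $X$ is finite and $Y$ generates $G$) yields the required bound with $\tilde\lambda := M \tilde\delta$ and $\tilde{c} := M(\tilde\delta + \varepsilon)$. The only real technical point is coordinating the three Cayley graphs $\Cay(G,Y)$, $\Cay(G,X)$, and $\Cay(G, Y \cup \HH)$ and matching the various notions of \emph{phase vertex} across the derived-path construction; with Theorems \ref{t:gsl} and \ref{t:bcp} doing the heavy lifting, everything else is bookkeeping, and one should WLOG assume $\ell_Y(P) \geq 1$ so the logarithm is non-negative.
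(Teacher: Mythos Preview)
Your argument is correct and is essentially identical to the paper's own proof: you enlarge $Y$ to a well-behaved $X$ via Theorem~\ref{t:gsl}, pass from $Q$ to a phase vertex of the derived path of an $X$-geodesic using Theorem~\ref{t:bcp}\eqref{i:t-bcp-near}, then invoke Proposition~\ref{p:logclose} and convert back to the $Y$-metric via a Lipschitz constant. The only cosmetic difference is notation (the paper calls your $P'$ by $R$ and your $M$ by $\lambda_X$), and note that in fact $\ell_X(P) = \ell_Y(P)$ rather than merely $\leq$, since the same edge-path is being measured.
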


\begin{rmk}
In fact, the conclusion of Corollary \ref{c:logclose} can be strengthened by further requiring a constant bound on $|p^{-1}q|_Y$ that is independent of $P$, i.e.\ we can further assume that $\tilde\lambda = 0$, and (moreover) it is enough to require $P$ to be a quasi-geodesic. This is shown in \cite[Lemma 8.8]{hruska}. However, for the purposes of proving Theorem \ref{t:main}, the conclusion of Corollary \ref{c:logclose} is enough.
\end{rmk}

\begin{proof}[Proof of Corollary \ref{c:logclose}]
Let $X$ be the well-behaved finite generating set containing $Y$ given by Theorem \ref{t:gsl}, and note that $X \cup \HH = Y \cup \HH$. Let $\lambda_X$ be the constant of the bilipschitz equivalence of $Y$ and $X$, i.e.\ a constant such that $|g|_Y \leq \lambda_X |g|_X$ for any $g \in G$ (we may take $\lambda_X := \max \{ |x|_Y \mid x \in X \}$). Given the generating set $X$, let $(\lambda,c)$ and $\tilde\delta$ be given by Theorem \ref{t:gsl} and Corollary \ref{c:hyp}, respectively, and let $\varepsilon(\lambda,c)$ be given by Theorem \ref{t:bcp}.

Now let $R$ be a geodesic path in $\Cay(G,X)$ with $R_- = i(P)_-$ and $R_+ = i(P)_+$, where $i: \Cay(G,Y) \hookrightarrow \Cay(G,X)$ is the canonical inclusion. Let $q \in Q$ be a vertex; note that, since $Q$ is a geodesic, $q$ is necessarily a phase vertex. By Theorem \ref{t:gsl}, $\widehat{R}$ is a $(\lambda,c)$-quasi-geodesic, and so by Theorem \ref{t:bcp}\eqref{i:t-bcp-near}, there exists a vertex $r$ of $\widehat{R}$ (viewed also as a phase vertex $r$ of $R$) such that $|r^{-1}q|_X \leq \varepsilon(\lambda,c)$. Now let $p \in P$ (technically, $i(p) \in i(P)$) be the vertex given by applying Proposition \ref{p:logclose} to the path $i(P)$, the geodesic $R$ and the phase vertex $r$ of $R$. Thus we have
\begin{align*}
\frac{1}{\lambda_X} |p^{-1}q|_Y &\leq |p^{-1}q|_X \leq |p^{-1}r|_X + |r^{-1}q|_X \leq \tilde\delta \left\lceil \log_2(\ell_X(P)) \right\rceil + \varepsilon(\lambda,c) \\ &\leq \tilde\delta \left( \log_2(\ell_Y(P))+1 \right) + \varepsilon(\lambda,c),
\end{align*}
so setting $\tilde\lambda := \lambda_Y \tilde\delta$ and $\tilde{c} := \lambda_Y \left( \tilde\delta + \varepsilon(\lambda,c) \right)$ gives the result.
\end{proof}

In particular, note that with $P$, $p$ and $q$ as in Corollary \ref{c:logclose}, the number $|p^{-1}q|_Y$ has a sublinear upper bound in terms of $\ell_Y(P)$. We will fix the constants $\tilde\lambda$ and $\tilde{c}$ given by Corollary \ref{c:logclose} for the remainder of this section, which gives a proof of the following Theorem.

\begin{thm} \label{t:parbound1}
Suppose $G$ is not virtually cyclic, and let $X$ be a finite generating set for $G$. Then we have $$|\mathcal{P} \cap B_{G,X}(n)| \leq D \sum_{\omega \in \Omega} \sum_{i=0}^{\left\lfloor \frac{n+f(n)}{2} \right\rfloor} |S_{G,X}(i)| |H_\omega \cap B_{G,X}(n+f(n)-2i)|$$ for some $D \geq 0$ and some function $f: \Z_{\geq 0} \to \Z_{\geq 0}$ such that $\frac{f(n)}{n} \to 0$ as $n \to \infty$.
\end{thm}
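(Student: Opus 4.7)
The plan is to associate to each $g \in \PP \cap B_{G,X}(n)$ a triple $(\omega, x, h) \in \Omega \times G \times H_\omega$ with $g = xhx^{-1}$ satisfying $2|x|_X + |h|_X \le n + f(n)$, for some function $f$ with $f(n)/n \to 0$. Since each such triple determines $g$ uniquely, enumerating triples yields the stated inequality (with the constant $D$ absorbing $|\Omega| < \infty$ from Theorem \ref{t:finmanypar}, geometric constants, and bounded multiplicity from the finite-order case). By Theorem \ref{t:gsl} we may enlarge $X$ to a well-behaved generating set, which rescales lengths only by a bilipschitz factor and preserves sublinearity.

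Given such a $g$, pick $\omega$ with $g \in H_\omega^G$ and let $x_0$ minimize $|x_0|_X$ among elements with $x_0^{-1}gx_0 \in H_\omega$; set $h_0 := x_0^{-1}gx_0$. Form the \emph{conjugation path} $\gamma$ in $\Cay(G, X \cup \HH)$ as the concatenation of an $X$-geodesic from $1$ to $x_0$, a single $H_\omega$-edge labelled $h_0$, and an $X$-geodesic from $x_0h_0$ to $g$. Then $\gamma$ has $(X\cup\HH)$-length $2|x_0|_X + 1$ and does not backtrack, since its only $H_\omega$-component is the middle edge. The key lemma, whose proof is the main obstacle, is that $\gamma$ is a $(\lambda,c)$-quasi-geodesic for some universal $\lambda, c$: any sufficiently long shortcut through $\gamma$ would, via Theorem \ref{t:almaln} controlling the $H_\omega$-coset structure in the infinite-order case, produce a strictly shorter conjugator of $g$ in the coset $x_0 H_\omega$, contradicting the minimality of $|x_0|_X$.

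Let $R$ be a geodesic from $1$ to $g$ in $\Cay(G, X \cup \HH)$ and let $P$ be a geodesic from $1$ to $g$ in $\Cay(G,X)$ of length $n' = |g|_X \le n$. Applying Theorem \ref{t:bcp} to $\gamma$ and $R$: either $|h_0|_X \le \varepsilon(\lambda,c)$ and the desired inequality is immediate, or $R$ contains an $H_\omega$-component connected to the middle edge of $\gamma$, with phase endpoints $u, uh'$ satisfying $|u^{-1}x_0|_X, |(uh')^{-1}(x_0h_0)|_X \le \varepsilon(\lambda,c)$. Corollary \ref{c:logclose} then places $u, uh'$ within distance $\tilde\lambda\log_2 n + \tilde c$ (in $|\cdot|_X$) of vertices $p_1, p_2 \in P$ at positions $i_1, i_2$ along $P$. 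Minimality of $|x_0|_X$ gives $|x_0|_X \le |x_0h_0|_X$, whence $i_1 \le i_2 + O(\log n)$. Combining
\[
|x_0|_X \le i_1 + O(\log n), \quad |x_0|_X = |(x_0h_0)^{-1}g|_X \le (n' - i_2) + O(\log n), \quad |h_0|_X \le (i_2 - i_1) + O(\log n),
\]
one obtains $2|x_0|_X + |h_0|_X \le n' + O(\log n) \le n + f(n)$ with $f(n) = O(\log n)$. The finite-order case ($h_0$ of finite order) requires separate treatment, since Theorem \ref{t:almaln} does not then pin down the conjugator coset, but only finitely many peripheral conjugacy classes of torsion elements are available to $g$, and the resulting multiplicity of triples per $g$ is absorbed into the constant $D$.
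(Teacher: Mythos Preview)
The central gap is the ``key lemma'' asserting that $\gamma$ is a uniform $(\lambda,c)$-quasi-geodesic in $\Cay(G,X\cup\HH)$. As you have built $\gamma$, this is false: the two sides are $X$-geodesics, and an $X$-geodesic need not be a quasi-geodesic in the relative Cayley graph at all (take $x_0$ lying deep inside some peripheral coset $gH_{\omega'}$; then $|x_0|_X$ can be arbitrarily large while $|x_0|_{X\cup\HH}$ stays bounded). Minimality of $|x_0|_X$ among conjugators of $g$ into $H_\omega$ does nothing to prevent long penetrations into \emph{other} peripheral cosets along the way. For the same reason, the claim that ``the only $H_\omega$-component of $\gamma$ is the middle edge'' is unjustified: a well-behaved $X$ contains letters from $\HH$, so the $X$-geodesic sides can carry their own $H_\omega$-components, and nothing you have said rules out backtracking. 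Without both properties, Theorem~\ref{t:bcp} simply does not apply to the pair $(\gamma,R)$.

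Even if you repair this by replacing the sides with $(X\cup\HH)$-geodesics and minimising $|x_0|_{X\cup\HH}$ instead (as the paper does), the concatenation $R_1QR_2^{-1}$ is \emph{still} not a uniform quasi-geodesic in general. The paper establishes quasi-geodesicity only under the extra hypothesis that the $H_\omega$-edge $Q$ is connected to a component of the short geodesic $U_3$ joining the opposite sides (Lemma~\ref{l:hlong}); this hypothesis is shown to hold whenever $|h|_X$ exceeds a fixed constant, but when $|h|_X$ is small a genuinely different argument (Lemma~\ref{l:hshort}) is needed, tracking a non-backtracking cycle of bounded relative length and using BCP on its two halves to bound the \emph{conjugator} rather than the peripheral part. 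Your proposed dichotomy ``$h_0$ of infinite order versus finite order'' is not the relevant one, and the appeal to ``finitely many peripheral conjugacy classes of torsion elements'' is not available: the finiteness result you seem to have in mind concerns finite-order \emph{hyperbolic} elements, not parabolic ones. In short, the sketch conflates two separate obstacles---making the sides relative quasi-geodesics, and making the whole conjugation path one---and the hand-wave via Theorem~\ref{t:almaln} does not close either of them.
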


Let $\hat{h} \in \mathcal{P}$ be an arbitrary parabolic element; by increasing the constant $D$ if necessary we may assume that $\hat{h} \neq 1$. Thus $\hat{h} = ghg^{-1}$ for some $g \in G$ and $h \in H_\omega$ for some $\omega \in \Omega$; choose $(g,h)$ in such a way that $|g|_{X \cup \HH}$ is minimal. Consider the conjugacy diagram $R_1 Q R_2^{-1} P^{-1}$, where the paths $P$, $Q$, $R_1$ and $R_2$ are geodesics in $\Cay(G, X \cup \HH)$ such that $\overline{P}_G = \hat{h}$, $\overline{Q}_G = h$ and $\overline{(R_1)}_G = \overline{(R_2)}_G = g$; note that $Q$ is a single edge. Let $P_0$ be a geodesic in $\Cay(G,X)$ such that $(\widehat{P_0})_- = P_-$ and $(\widehat{P_0})_+ = P_+$. See Figure \ref{f:mapto3}, left.

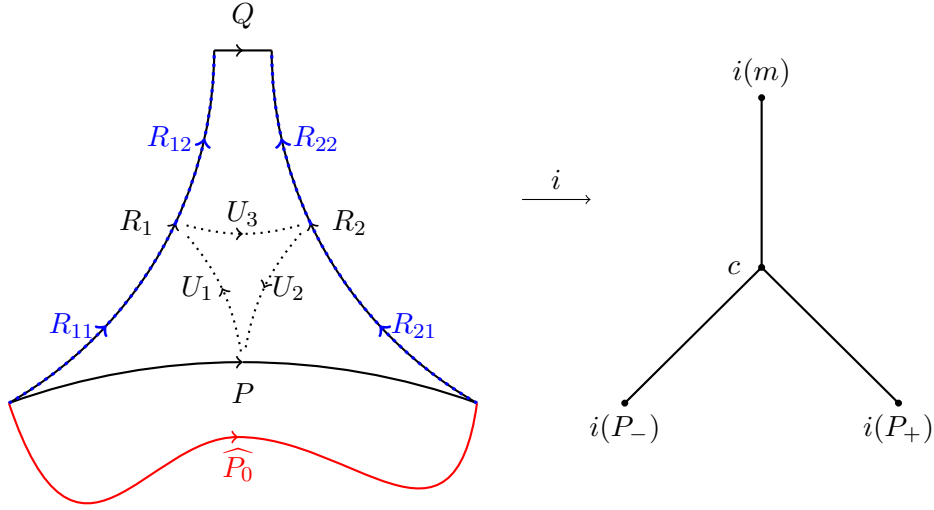
\begin{figure}[ht]
\begin{tikzpicture}[scale=0.9]
\draw[thick,->] (0,0) node (p-) {} arc (110:90:10) node[label=below:$P$] (p) {};
\draw[thick] (p) arc (90:70:10) node (p+) {};
\draw[thick,->] (p-) arc (-60:-25:6) node[label=left:$R_1$] (r1) {};
\draw[thick] (r1) arc (-25:0:6) node (q-) {};
\draw[thick,->] (p+) arc (240:205:6) node[label=right:$R_2$] (r2) {};
\draw[thick] (r2) arc (205:180:6) node (q+) {};
\draw[thick,->-] (q-.center) -- (q+.center) node[midway,label=above:$Q$] (q) {};
\draw[thick,red,->-] (p-.center) .. controls (1,-3) and (2,-0.5) .. (3.35,-0.5)  node[below] {$\widehat{P_0}$} .. controls (5,-0.5) and (6.5,-2.5) .. (p+.center);
\draw[->] (7.5,3) -- (8.5,3) node[midway,above] {$i$};
\draw[thick,fill=black] (9,0) circle (1pt) node[below] {$i(P_-)$} -- (11,2) circle (1pt) node[label=left:$c$] (c) {};
\draw[thick,fill=black] (13,0) circle (1pt) node[below] {$i(P_+)$} -- (c.center);
\draw[thick,fill=black] (11,4.5) circle (1pt) node[above] {$i(m)$} -- (c.center);
\draw[dotted,thick,->-] (p) to[bend right=15] node[midway,label={[label distance=-5pt]left:$U_1$}] (u1) {} (r1);
\draw[dotted,thick,->-] (r2) to[bend right=15] node[midway,label={[label distance=-5pt]right:$U_2$}] (u2) {} (p);
\draw[dotted,thick,->-] (r1) to[bend right=15] node[midway,label={[label distance=-5pt]above:$U_3$}] (u3) {} (r2);
\draw[blue,very thick,->-,dotted] (p-) arc (-60:-25:6) node[midway,left] {$R_{11}$};
\draw[blue,very thick,->-,dotted] (r1) arc (-25:0:6) node[midway,left] {$R_{12}$};
\draw[blue,very thick,->-,dotted] (p+) arc (240:205:6) node[midway,right] {$R_{21}$};
\draw[blue,very thick,->-,dotted] (r2) arc (205:180:6) node[midway,right] {$R_{22}$};
\end{tikzpicture}
\vspace{-4em}
\caption{The conjugacy diagram $R_1 Q R_2^{-1} P^{-1}$ (left) and the map $i$ to the tripod (right).}  \label{f:mapto3}
\end{figure}

Now we temporarily relax the assumption that endpoints of paths in Cayley graphs are always assumed to be vertices, and view all graphs as geodesic metric spaces. Let $m$ be the midpoint of the edge $Q$ and consider the isometry $i$ from the diagram $R_1 Q R_2^{-1} P^{-1}$ to a tripod $T$, such that $P_-$, $P_+$ and $m$ are mapped to the ``leaves'' of $T$. Let $c$ be the ``branching vertex'' of $T$, and let $c_1$ (resp.\ $c_2$, $c_3$) be the (unique) point in $i^{-1}(c) \cap R_2$ (resp.\ $i^{-1}(c) \cap R_1$, $i^{-1}(c) \cap P$). For each $j \in \Z/3\Z$, let $U_j$ be a geodesic in $\Cay(G,X \cup \HH)$ with $(U_j)_- = c_{j-1}$ and $(U_j)_+ = c_{j+1}$. Finally, for $j \in \Z/2\Z$, let $R_{j1}$ (resp.\ $R_{j2}$) be the subpath of $R_j$ with endpoints $(R_{j1})_- = (R_j)_-$ and $(R_{j1})_+ = c_{j+1}$ (resp.\ $(R_{j2})_- = c_{j+1}$ and $(R_{j2})_+ = (R_j)_+$). See Figure \ref{f:mapto3}.

We call a geodesic $n$-gon in a graph \emph{nice} if its vertices (as an $n$-gon) are also vertices of the graph. Now by Theorem \ref{t:hyp}, nice geodesic triangles in $\Cay(G, X \cup \HH)$ are $\nu$-slim (meaning any edge of the triangle is in the $\nu$-neighbourhood of the union of other two edges). Since any general geodesic triangle in a graph is a nice geodesic $n$-gon for $n \leq 6$, it can be shown (by drawing diagonals) that any geodesic triangle in $\Cay(G, X \cup \HH)$ is $(3\nu)$-slim. In particular, if we apply \cite[Proposition 2.1]{msri} to the conjugacy diagram $R_1 Q R_2^{-1} P^{-1}$ (viewed as a geodesic triangle with vertices $P_-$, $P_+$ and $m$), the following is true:

\begin{lem} \label{l:diams} \begin{enumerate}[(i)]
\item \label{i:ld1} The diameter of $i^{-1}(t)$ is $\leq 18\nu$ for any $t \in T$.
\item \label{i:ld2} The diameter of $i^{-1}(c)$ is $\leq 12 \nu$, i.e.\ $\ell_{X \cup \HH}(U_j) \leq 12 \nu$ for $j \in \{1,2,3\}$. \qed
\end{enumerate}
\end{lem}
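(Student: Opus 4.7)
The plan is to recognise the conjugacy diagram as a genuine geodesic triangle in $\Cay(G, X \cup \HH)$ with corners $P_-$, $P_+$ and $m$, and then to feed its slimness into the standard tripod comparison result \cite[Proposition 2.1]{msri}.

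First, I would check that all three sides are geodesics. The side $P$ is a geodesic by construction. For the side consisting of $R_1$ followed by the first half of the edge $Q$, from $P_-$ to $m$, note that $m$ lies in the interior of a single edge, giving
\[
d(P_-, m) \;=\; \min \bigl( d(P_-, Q_-),\, d(P_-, Q_+) \bigr) + \tfrac{1}{2},
\]
with $d(P_-, Q_-) = |g|_{X \cup \HH}$ and $d(P_-, Q_+) = |gh|_{X \cup \HH}$. Since the pair $(gh, h)$ also conjugates $h$ to $\hat h$ (as $(gh) h (gh)^{-1} = g h g^{-1} = \hat h$), the minimality of $|g|_{X \cup \HH}$ in the choice of $(g,h)$ forces $|gh|_{X \cup \HH} \geq |g|_{X \cup \HH}$, so $d(P_-, m) = |g|_{X \cup \HH} + \tfrac{1}{2}$, matching the length of the proposed side. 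A symmetric argument handles the remaining side from $m$ to $P_+$.

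Second, I would invoke the observation already made in the excerpt that every geodesic triangle in $\Cay(G, X \cup \HH)$ is $(3\nu)$-slim (obtained by subdividing an arbitrary geodesic triangle into at most four nice ones and applying Theorem \ref{t:hyp}). Applied to our triangle, \cite[Proposition 2.1]{msri} then bounds the diameter of an arbitrary fibre $i^{-1}(t)$ by $6\delta$ and of the central fibre $i^{-1}(c)$ by $4\delta$, for $\delta = 3\nu$; these are precisely the constants $18\nu$ and $12\nu$ appearing in (i) and (ii).

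The only delicate point is the geodesicity verification for the two sides through $m$: this is the single place in the proof where the minimality in the selection of $(g,h)$ is actually used. Once that is in place, the lemma reduces to a direct application of the tripod-comparison machinery for $\delta$-slim geodesic triangles, and no further work is needed.
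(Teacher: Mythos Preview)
Your proof is correct and follows the same route as the paper: establish $(3\nu)$-slimness of arbitrary geodesic triangles in $\Cay(G,X\cup\HH)$ by subdividing into nice ones, then apply \cite[Proposition 2.1]{msri} with $\delta=3\nu$. You additionally spell out why the two sides through $m$ are genuine geodesics (using the minimality of $|g|_{X\cup\HH}$), a point the paper leaves implicit when it calls the conjugacy diagram a geodesic triangle; this verification is correct and worth including.
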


We now divide the argument in two parts, depending on whether or not $Q$ is connected to a non-trivial component of $U_3$.

\begin{lem} \label{l:hlong}
There exists a universal constant $f_0 = f_0(G,Y,\{H_\omega\}_{\omega \in \Omega}) \geq 0$ such that if $Q$ is connected to a component of $U_3$ for some triple $(\hat{h},h,g)$ as above, then $$|\hat{h}|_X \geq 2|g|_X + |h|_X - 4\tilde\lambda\log_2(|\hat{h}|_X)-f_0.$$
\end{lem}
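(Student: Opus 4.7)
The plan is to locate vertices $p, p'$ on an $X$-geodesic $P_0$ from $1$ to $\hat h$ that are $X$-close to $g$ and $gh$ respectively, and then lower-bound $|\hat h|_X = \ell_X(P_0)$ via the triangle inequality. The first ingredient is to deduce from the hypothesis that the rectangular path $P^{\star} := R_1 Q R_2^{-1}$, going from $1$ to $\hat h$, is a uniform quasi-geodesic in $\Cay(G, X \cup \HH)$. Writing $S_- = g h_1$ with $h_1 \in H_\omega$, the path $R_{11}$ concatenated with the $c_2$-to-$S_-$ subpath of $U_3$ reaches $S_-$ from $1$ in at most $k/2 + 12\nu$ steps (with $k := |\hat h|_{X \cup \HH}$), while the minimality of $|g|_{X \cup \HH}$ applied to the alternative triple $(g h_1, h_1^{-1} h h_1)$ forces $|g h_1|_{X \cup \HH} \geq |g|_{X \cup \HH}$. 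Together these give $|g|_{X \cup \HH} \leq k/2 + 12\nu$, whence $\ell_{X \cup \HH}(P^{\star}) = 2|g|_{X \cup \HH} + 1 \leq k + 24\nu + 1$, so $P^{\star}$ is a $(1, 24\nu + 1)$-quasi-geodesic.

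Next I would argue that $P^{\star}$ can be arranged not to backtrack. The same minimality argument forces the $H_\omega$-component of $P^{\star}$ containing $Q$ to be just $Q$ itself: if $R_1$ ended with an $H_\omega$-edge of the same $\omega$ as $Q$, that edge could be absorbed into $h$, contradicting minimality, and the symmetric statement holds for $R_2$. The remaining possibility---an $H_\omega$-component of $R_1$ connected to one of $R_2^{-1}$---is confined by the above quasi-geodesic estimate to a uniformly bounded neighbourhood of $Q$ (a shortcut argument forces the relevant vertex on $R_1$ to be within $O(1)$ of $g$, and likewise on $R_2$), and can be resolved either by a careful choice of geodesic representatives or by modifying $P^{\star}$ on this bounded zone, at the cost of a bounded additive constant. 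This is the most delicate step.

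Having $P^{\star}$ as a non-backtracking quasi-geodesic, Theorem~\ref{t:bcp}\eqref{i:t-bcp-near} applied to $P^{\star}$ and $P$ yields phase vertices $v, v' \in P$ with $|g^{-1} v|_X, |(gh)^{-1} v'|_X \leq \varepsilon := \varepsilon(1, 24\nu + 1)$, since $Q_-$ and $Q_+$ are phase vertices of $P^{\star}$. Corollary~\ref{c:logclose} (with $Y = X$) applied to $P_0$ and $P$, both from $1$ to $\hat h$, then produces vertices $p, p' \in P_0$ with $|v^{-1} p|_X, |(v')^{-1} p'|_X \leq \tilde\lambda \log_2 |\hat h|_X + \tilde c$, whence $|g^{-1} p|_X, |(gh)^{-1} p'|_X \leq E$ with $E := \tilde\lambda \log_2 |\hat h|_X + \tilde c + \varepsilon$.

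In the generic case $|h|_X > 2E$ (the degenerate case is absorbed into $f_0$), we have $|p^{-1} p'|_X \geq |h|_X - 2E > 0$, so $p$ strictly precedes $p'$ on $P_0$. The $X$-lengths of the three subpaths of $P_0$ (from $1$ to $p$, from $p$ to $p'$, and from $p'$ to $\hat h$) are then bounded below by $|g|_X - E$, $|h|_X - 2E$ and $|g|_X - E$ respectively (the last using $|(gh)^{-1} \hat h|_X = |g^{-1}|_X = |g|_X$); summing yields
\[ |\hat h|_X \geq 2|g|_X + |h|_X - 4E = 2|g|_X + |h|_X - 4\tilde\lambda \log_2 |\hat h|_X - f_0, \]
where $f_0 := 4(\tilde c + \varepsilon)$, up to an additive constant absorbing the degenerate case.
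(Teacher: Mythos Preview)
Your overall strategy coincides with the paper's: show that $P^\star=R_1QR_2^{-1}$ is a uniform $(1,c)$-quasi-geodesic without backtracking, apply Theorem~\ref{t:bcp}\eqref{i:t-bcp-near} to push $Q_\pm$ onto $P$, then Corollary~\ref{c:logclose} to push them onto $P_0$, and finish with the triangle inequality. The final two paragraphs are fine and match the paper essentially verbatim (the paper gets $f_0=4(\tilde c+\varepsilon(1,36\nu+3))$ instead of your $4(\tilde c+\varepsilon(1,24\nu+1))$).

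Your derivation of the quasi-geodesic constant is a legitimate alternative to the paper's. The paper uses Lemma~\ref{l:diams} directly: once $Q$ is connected to a component of $U_3$ one gets $\ell_{X\cup\HH}(R_{12})=\ell_{X\cup\HH}(R_{22})\leq 6\nu+1$, and the tripod fellow-travelling then makes $P^\star$ a $(1,36\nu+3)$-quasi-geodesic. You instead use the connected component $S\subseteq U_3$ to produce an alternative conjugator $S_-=gh_1$ and invoke minimality of $|g|_{X\cup\HH}$ to get $|g|_{X\cup\HH}\leq k/2+12\nu$, hence $\ell(P^\star)\leq k+24\nu+1$. One remark: a total-length bound does not \emph{a priori} give a $(1,c)$-quasi-geodesic; here it does because $P^\star$ is a concatenation (geodesic)$\cdot$(edge)$\cdot$(geodesic), and for such a path the worst subpath inequality reduces to the endpoint one via the triangle inequality. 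You should say this explicitly.

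The genuine gap is the non-backtracking step. First, showing that the $H_\omega$-component of $P^\star$ containing $Q$ is just $Q$ (no adjacent $H_\omega$-edges) is not the same as showing $Q$ is isolated: you still need to rule out that $Q$ is connected to a \emph{non-adjacent} $H_\omega$-component of $R_1$ or $R_2$. Second, your treatment of an $R_1$--$R_2$ connection is only a sketch (``modifying $P^\star$ on this bounded zone''), and you have restricted attention to $H_\omega$-components when backtracking could occur via any $H_\mu$. The paper handles all of this in one stroke by choosing $R_2=\hat h R_1$ (same label). Then if $Q$ is connected to a component of $R_1$ it is, by translation by $\hat h$, connected to the component of $R_2$ at the same position; and whenever an $H_\mu$-component of $R_1$ at position $a$ is connected to the $H_\mu$-component of $R_2$ at the same position $a$, one reads off $\hat h=(C_1)_-\,h_\mu\,(C_1)_-^{-1}$ with $h_\mu\in H_\mu$ and $|(C_1)_-|_{X\cup\HH}=a<|g|_{X\cup\HH}$, contradicting minimality. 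This direct minimality argument is what you are missing; it is both shorter and avoids the surgery you propose.
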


\begin{proof}
Since $R_{12}$, $R_{22}$ and $U_3$ are geodesics, and $Q$ is connected to a component of $U_3$, it follows by Lemma \ref{l:diams} \eqref{i:ld2} that $$(\ell_{X \cup \HH}(R_{12})-1) + (\ell_{X \cup \HH}(R_{22})-1) \leq \ell_{X \cup \HH}(U_3) \leq 12\nu$$ and so $$\ell_{X \cup \HH}(R_{12}) = \ell_{X \cup \HH}(R_{22}) \leq 6\nu+1.$$ By Lemma \ref{l:diams}, it follows that given any vertex $r$ on $R_{12}$ (resp.\ $R_{22}$), there exists a vertex $p$ on $P$ such that we have $|p^{-1}r|_{X \cup \HH} \leq 18\nu+1$ and $|P_-^{-1}p|_{X \cup \HH} \leq |P_-^{-1}r|_{X \cup \HH}$ (resp.\ $|P_+^{-1}p|_{X \cup \HH} \leq |P_+^{-1}r|_{X \cup \HH}$). It is easy to check that in this case $R_1QR_2^{-1}$ is a $(1,36\nu+3)$-quasi-geodesic.

Note also that the path $R_1QR_2^{-1}$ does not backtrack: indeed, if $Q$ was connected to a non-trivial component of either $R_1$ or $R_2$ then it would be connected to both of them, and if some non-trivial components of $R_1$ and $R_2$ were connected then, since $R_1$ and $R_2$ are geodesics both labelled by $g$, this would contradict the minimality of $|g|_{X \cup \HH}$.

Now consider the (phase) vertices $Q_-$ and $Q_+$ of $R_1QR_2^{-1}$. Applying Theorem \ref{t:bcp} \eqref{i:t-bcp-near} to $R_1QR_2^{-1}$ and $P$ and Corollary \ref{c:logclose} to $P_0$ and $P$, it follows that there are vertices $p_-$ and $p_+$ on $P_0$ such that $$|Q_-^{-1}p_-|_X, |Q_+^{-1}p_+|_X \leq \tilde\lambda\log_2(|\hat{h}|_X)+\tilde{c}+\varepsilon(1,36\nu+3).$$ It follows that there exist elements $$z_-,z_+ \in B_{G,X}(\tilde\lambda\log_2(|\hat{h}|_X)+\tilde{c}+\varepsilon(1,36\nu+3))$$ such that $$g = p_1 z_- = p_3^{-1}z_+ \qquad \text{and} \qquad h = z_-^{-1}p_2z_+$$ where $p_1,p_2,p_3 \in G$ are such that $$\hat{h} = p_1p_2p_3 \qquad \text{and} \qquad |\hat{h}|_X = |p_1|_X + |p_2|_X + |p_3|_X.$$ Thus setting $f_0 := 4(\tilde{c}+\varepsilon(1,36\nu+3))$ gives the result.
\end{proof}

Now consider the paths $R_{12}Q$ and $U_3R_{22}$ with endpoints $(U_3)_-$ and $Q_+$. Since $Q$ is a single edge, it follows from Lemma \ref{l:diams} \eqref{i:ld2} that both of these paths are $(1,24\nu)$-quasi-geodesics. As follows from the proof of Lemma \ref{l:hlong}, $R_{12}Q$ does not backtrack; $U_3R_{22}$ might backtrack, but we can ``shorten this path along any backtracks'' to find a  $(1,24\nu)$-quasi-geodesic path $\widetilde{U}$ with $\widetilde{U}_- = (U_3)_-$ and $\widetilde{U}_+ = Q_+$ such that all the vertices of $\widetilde{U}$ are on $U_3R_{22}$. Applying Theorem \ref{t:bcp} \eqref{i:t-bcp-conn} to $R_{12}Q$ and $\widetilde{U}$ then says that if $|h|_X > \varepsilon(1,24\nu)$ then $Q$ is connected to a non-trivial component of $\widetilde{U}$. As the path $QR_{22}^{-1}$ does not backtrack, in this case $Q$ cannot be connected to a component of $R_{22}$ (apart from $(R_{22})_+ = Q_+$ if it is a trivial component of $R_{22}$), and so $Q$ must be connected to a component of $U_3$.

Therefore Lemma \ref{l:hlong} applies whenever $|h|_X > \varepsilon(1,24\nu)$, thus for all but finitely many elements $h$. By setting $D := D_0 B_{G,X}(\varepsilon(1,24\nu))+1$ (for some $D_0 \geq 0$) and picking $f: \Z_{\geq 0} \to \Z_{\geq 0}$ to be the pointwise maximum of finitely many sublinear functions (so still sublinear), Theorem \ref{t:parbound1} follows from the following result (since $1 \in H_\omega$):

\begin{lem} \label{l:hshort}
Let $h_0 \in H_\omega$ for some $\omega \in \Omega$, and let $\mathcal{P}(h_0)$ be the set of elements $\hat{h}$ in the conjugacy class $h_0^G$ such that if $g$ and $h$ are as above, then $h = h_0$ and $Q$ is not connected to a component of $U_3$. Then $$|\mathcal{P}(h_0) \cap B_{G,X}(n)| \leq D_0 \left| B_{G,X}\left( \left\lfloor \frac{n+f_0}{2} \right\rfloor \right) \right|$$ for some constants $D_0,f_0 \geq 0$.
\end{lem}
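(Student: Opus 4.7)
The argument immediately preceding the lemma (applying Theorem \ref{t:bcp} \eqref{i:t-bcp-conn} to $R_{12}Q$ and $\widetilde{U}$) already shows that the hypothesis that $Q$ is not connected to any component of $U_3$ forces $|h_0|_X \leq \varepsilon(1,24\nu)$. Thus I may assume $|h_0|_X$ is bounded by a universal constant $\eta_0$; otherwise $\mathcal{P}(h_0)$ is empty and the bound is vacuous.

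The plan is then to show that each $\hat{h} \in \mathcal{P}(h_0) \cap B_{G,X}(n)$ admits a conjugator $g' \in G$ with $g' h_0 g'^{-1} = \hat{h}$ and $|g'|_X \leq \lfloor (n+f_0)/2 \rfloor$ for a universal constant $f_0$. Once this is done, the assignment $\hat{h} \mapsto g' C_G(h_0)$ is an injection of $\mathcal{P}(h_0) \cap B_{G,X}(n)$ into the set of cosets of $C_G(h_0)$ represented by elements of $B_{G,X}(\lfloor (n+f_0)/2 \rfloor)$, yielding the stated bound with $D_0 = 1$.

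To construct such a $g'$, I would work with the minimum-$|g|_{X\cup\HH}$ conjugator from the tripod construction and with the vertex $c_2 \in R_1$, which lies at $X\cup\HH$-distance $|\hat{h}|_{X\cup\HH}/2$ from $1$. Under the Case~2 hypothesis the concatenation $R_{11} U_3 R_{21}^{-1}$ is a $(1,12\nu)$-quasi-geodesic from $1$ to $\hat{h}$ in $\Cay(G,X\cup\HH)$, and an argument paralleling that of Lemma \ref{l:hlong} -- combining the minimality of $|g|_{X\cup\HH}$ with the non-connection of $Q$ and $U_3$ -- shows that this concatenation does not backtrack. Applying Theorem \ref{t:bcp} \eqref{i:t-bcp-near} to this quasi-geodesic and to the derived path $\widehat{P_0}$ of a geodesic $P_0$ from $1$ to $\hat{h}$ in $\Cay(G,X)$ (first enlarging $X$ to a well-behaved generating set via Theorem \ref{t:gsl} and translating back by bilipschitz equivalence, as in the proof of Corollary \ref{c:logclose}), I locate a phase vertex $p$ of $P_0$ within constant $X$-distance of $c_2$. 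Since $p$ lies on a geodesic in $\Cay(G,X)$ and $|c_2|_{X\cup\HH} = |\hat{h}|_{X\cup\HH}/2 \leq n/2$, this gives $|p|_X \leq \lfloor (n+f_0)/2 \rfloor$ for a suitable $f_0$.

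The main obstacle is then to upgrade $p$ to a genuine conjugator $g' \in p \cdot C_G(h_0)$ with $|g'|_X \leq |p|_X + O(1)$. Although $p$ is close to $c_2$, the vertex $c_2$ may differ from $g$ by the potentially long element $\overline{R_{12}}$; the hope is that in Case~2 the non-backtracking structure of $R_{11}U_3R_{21}^{-1}$, together with the strong malnormality of the peripheral subgroups provided by Theorem \ref{t:almaln}, forces $c_2$ to lie within bounded $X$-distance of the coset $g \cdot C_G(h_0)$. This last step is the most delicate, particularly when $h_0$ has finite order (so that $C_G(h_0)$ need not be contained in $H_\omega$), and it is where the Case~2 hypothesis must be fully exploited in order to avoid the logarithmic loss inherent in Corollary \ref{c:logclose}.
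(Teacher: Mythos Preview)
Your proposal has two genuine gaps, and the paper's proof avoids both by taking a rather different route.

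First, the inference ``$|c_2|_{X\cup\HH} = |\hat{h}|_{X\cup\HH}/2 \leq n/2$, hence $|p|_X \leq \lfloor (n+f_0)/2 \rfloor$'' is not valid as stated. Knowing that $p$ lies on an $X$-geodesic $P_0$ from $1$ to $\hat{h}$ only tells you $0 \leq |p|_X \leq n$; the $X\cup\HH$-length of $c_2$ says nothing about where on $P_0$ the vertex $p$ sits. To pin $|p|_X$ near $n/2$ you would need to exploit the symmetry $c_1 = \hat{h} c_2$ and find a second vertex $p'$ on $P_0$ close to $c_1$, then argue that $|p^{-1}p'|_X$ is bounded. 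But that last step requires $|c_2^{-1}c_1|_X$ to be bounded, whereas Lemma~\ref{l:diams} only bounds $|c_2^{-1}c_1|_{X\cup\HH}$. Converting this into an $X$-bound is precisely the substance of the argument and needs the BCP property applied to a carefully arranged non-backtracking cycle---it does not come for free.

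Second, the ``main obstacle'' you flag---producing a conjugator in $gC_G(h_0)$ within bounded $X$-distance of $c_2$---is left as a hope rather than an argument. There is no reason to expect $\overline{R_{12}}$ to lie near $C_G(h_0)$, and neither Theorem~\ref{t:almaln} nor the non-connection hypothesis gives this directly.

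The paper sidesteps both issues by abandoning $h_0$ as the conjugation target. Instead it constructs, using the non-connection hypothesis, a non-backtracking cycle $\mathcal{C} = \widetilde{R_{12}}Q\widetilde{R_{22}}^{-1}\widetilde{U_3}^{-1}$ and uses BCP to show that every component of $\mathcal{C}$ has bounded $X$-length. This yields a vertex $r_-$ on $R_1$ with $\hat{h} = r_- r_0 r_-^{-1}$ where $r_0 := r_-^{-1}(\hat{h} r_-)$ satisfies a \emph{uniform} bound $|r_0|_X \leq (12\nu+1)\varepsilon(1,24\nu+6)$. The constant $D_0$ is then simply the number of possible values of $r_0$, and for each fixed $r_0$ the map $\hat{h} \mapsto r_-$ is injective; a second BCP/Corollary~\ref{c:logclose} argument then shows $|r_-|_X \leq (n+f_0)/2$. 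The point is that $r_0$, unlike $h_0$, is chosen precisely so that its $X$-length (not merely its $X\cup\HH$-length) is controlled, which is what makes the half-distance estimate go through.
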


\begin{proof}
Consider the closed path $R_{12}QR_{22}^{-1}U_3^{-1}$. Since the path $R_{12}QR_{22}^{-1}$ does not backtrack and by assumption $Q$ is not connected to a conponent of $U_3$, it follows that given any two distinct non-trivial connected components of $R_{12}QR_{22}^{-1}U_3^{-1}$, one of them must be on $U_3$ and the other one on either $R_{12}$ or $R_{22}$. But since $U_3$ is an arbitrary geodesic, we may without loss of generality assume that (loosely speaking) $U_3$ follows $R_{12}$ (and $U_3^{-1}$ follows $R_{22}$) until the last of their non-trivial connected components. Thus we have a closed path $\mathcal{C} := \widetilde{R_{12}}Q\widetilde{R_{22}}^{-1}\widetilde{U_3}^{-1}$ which does not backtrack and all of its vertices are phase except for, possibly, endpoints of $\widetilde{U_3}$. See Figure \ref{f:cycle}. Note that $\ell_{X \cup \HH}(\widetilde{U_3}) \geq 1$ since (by minimality of $|g|_{X \cup \HH}$) $R_1 \cap R_2 = \varnothing$.

\begin{figure}[ht]
\begin{tikzpicture}
\draw[thick,->] (0,0) node (r1) {} arc (-25:-15:10) node[label=left:$R_{12}$] (r12) {};
\draw[thick] (r12) arc (-15:0:10) node (q-) {};
\draw[very thick,blue,dotted,->-] (r12) arc (-15:0:10) node[midway,label=left:$\widetilde{R_{12}}$] (r12t) {};
\draw[thick,->] (3.5,0) node (r2) {} arc (205:190:10) node[label=right:$R_{22}$] (r22) {};
\draw[thick] (r22) arc (190:180:10) node (q+) {};
\draw[very thick,blue,dotted,->-] (r22) arc (190:180:10) node[midway,label=right:$\widetilde{R_{22}}$] (r22t) {};
\draw[thick] (q-.center) -- (q+.center);
\draw[very thick,blue,dotted,->-] (q-.center) -- (q+.center) node[midway,label=above:$Q$] (q) {};
\draw[very thick,blue,dotted,->-] (r12.center) to [bend right=20] node[midway,label=above:$\widetilde{U_3}$] (u3) {} (r22.center);
\draw[very thick,red,dotted,->-] (r1) arc (-25:-15:10) node[midway,label=right:$\widetilde{R_{12}'}$] (r12p) {};
\draw[very thick,red,dotted,->-] (r2) arc (205:190:10) node[midway,label=left:$\widetilde{R_{22}'}$] (r22p) {};
\fill (r1) node[above left] {$(U_3)_-$} circle (1.5pt);
\fill (r2) node[above right] {$(U_3)_+$} circle (1.5pt);
\end{tikzpicture}
\caption{The closed paths $R_{12}QR_{22}^{-1}U_3^{-1}$ (in red and blue) and $\mathcal{C} = \widetilde{R_{12}}Q\widetilde{R_{22}}^{-1}\widetilde{U_3}^{-1}$ (in blue). Here $U_3 = \widetilde{R_{12}'}\widetilde{U_3}\widetilde{R_{22}'}^{-1}$, where $\widetilde{R_{12}'}$ and $\widetilde{R_{22}'}$ are such that $R_{j2} = \widetilde{R_{j2}'} \widetilde{R_{j2}}$.}  \label{f:cycle}
\end{figure}
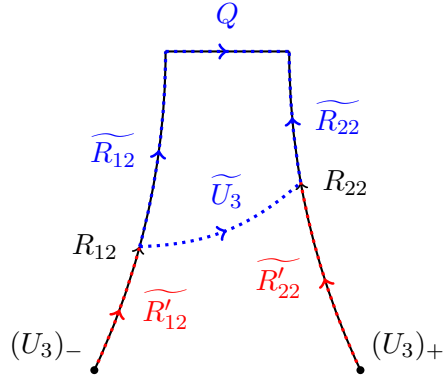

Suppose without loss of generality that one of the following three cases holds: \begin{enumerate}[(i)]
\item $\ell_{X \cup \HH}(\widetilde{R_{12}}) > \ell_{X \cup \HH}(\widetilde{R_{22}})$, or
\item $\ell_{X \cup \HH}(\widetilde{R_{12}}) = \ell_{X \cup \HH}(\widetilde{R_{22}})$ and $(\widetilde{U_3})_-$ is a phase vertex of the closed path $\mathcal{C}$, or
\item $\ell_{X \cup \HH}(\widetilde{R_{12}}) = \ell_{X \cup \HH}(\widetilde{R_{22}})$ and both $(\widetilde{U_3})_-$, $(\widetilde{U_3})_+$ are non-phase vertices of $\mathcal{C}$.
\end{enumerate} Let $r_+$ be $(\widetilde{U_3})_+$ if $(\widetilde{U_3})_+$ is a phase vertex of $\mathcal{C}$, and let $r_+$ be the vertex on $\widetilde{R_{22}}$ that is adjacent to $(\widetilde{U_3})_+ = (\widetilde{R_{22}})_-$ otherwise. It follows that both $r_+$ and $r_- := \hat{h}^{-1}r_+$ (the latter one being a vertex of $\widetilde{R_{12}}$) are phase vertices of $\mathcal{C}$. Moreover, there is a subpath of $\mathcal{C}$ with endpoints $r_-$ and $r_+$ that is a union of at most $$\ell_{X \cup \HH}(\widetilde{U_3}) + \left| \ell_{X \cup \HH}(\widetilde{R_{12}'}) - \ell_{X \cup \HH}(\widetilde{R_{22}'}) \right| + 1 \leq \ell_{X \cup \HH}(U_3)+1 \leq 12\nu+1$$ components. All of these components have length $1$ (apart from, possibly, one or two components which have length $2$), hence $|r_-^{-1}r_+|_{X \cup \HH} \leq 12\nu+3$.

Now consider the two subpaths of $\mathcal{C}$ joining $r_-$ and $Q_+$. By the above, it follows that they are $(1,24\nu+6)$-quasi-geodesics that do not backtrack and do not have non-trivial connected components. By Theorem \ref{t:bcp} \eqref{i:t-bcp-conn}, if $S$ is a component of $\mathcal{C}$, then $|S_-^{-1}S_+|_X \leq \varepsilon(1,24\nu+6)$. Thus, by the previous paragraph, it follows that $$|r_-^{-1}r_+|_X \leq (12\nu+1) \varepsilon(1,24\nu+6).$$ In particular, since $\hat{h} = r_-r_0r_-^{-1}$ where $r_0 = r_-^{-1}r_+$, by setting $$D_0 := B_{G,X}((12\nu+1) \varepsilon(1,24\nu+6))$$ and fixing $r_0 \in G$ it is enough to show that $$|\mathcal{P}(h_0,r_0) \cap B_{G,X}(n)| \leq \left| B_{G,X}\left( \left\lfloor \frac{n+f_0}{2} \right\rfloor \right) \right|$$ for some constant $f_0 \geq 0$, where $\mathcal{P}(h_0,r_0)$ is the set of $\hat{h} \in h_0^G$ with $h = h_0$ and $r_0$ as above.

Note that every vertex $v$ on $\widetilde{U_3}$ satisfies $|r_-^{-1}v|_X \leq (12\nu+1) \varepsilon(1,24\nu+6)$. Since $\widetilde{U_3}$ is an arbitrary geodesic (after fixing its endpoints), we may assume (similarly to the case above) that $\widetilde{U_3}$ follows $\widetilde{R_{12}'}^{-1} R_{11}^{-1}$ (and $\widetilde{U_3}^{-1}$ follows $\widetilde{R_{22}'}^{-1} R_{21}^{-1}$) until the last of their non-trivial connected components. Thus we obtain a path $\widetilde{R_1} \widetilde{U_3'} \widetilde{R_2}^{-1}$ (where $\widetilde{R_1}$, $\widetilde{U_3'}$, $\widetilde{R_2}$ are subpaths of $R_1$, $\widetilde{U_3}$, $R_2$, respectively) that does not backtrack and all its vertices are phase, except for possibly endpoints of $\widetilde{U_3}'$. Since all vertices of this path are on the $(1,36\nu)$-quasi-geodesic $R_{11}U_3R_{21}^{-1}$, it follows that $\widetilde{R_1} \widetilde{U_3'} \widetilde{R_2}^{-1}$ is also a $(1,36\nu)$-quasi-geodesic.

Now if either $\ell_{X \cup \HH}(\widetilde{U_3'}) > 1$ or $(\widetilde{U_3'})_-$ is a phase vertex of $\widetilde{R_1} \widetilde{U_3'} \widetilde{R_2}^{-1}$, then $\widetilde{R_1} \widetilde{U_3'} \widetilde{R_2}^{-1}$ contains a phase vertex that is on $\widetilde{U_3}$. Otherwise, consider the path $\widetilde{R_1'} \widetilde{U_3''} \widetilde{R_2}^{-1}$ obtained by replacing the non-geodesic subpath of length $2$ with interior point $(\widetilde{U_3'})_-$ by an edge $\widetilde{U_3''}$. Since $R_1$ and $R_2$ have no connected components, we have that either $(\widetilde{U_3''})_+ = (\widetilde{U_3'})_+$ is a phase vertex of $\widetilde{R_1'} \widetilde{U_3''} \widetilde{R_2}^{-1}$, or the edge $\widetilde{U_3''}$ is labelled by an element of $H_\omega \cap H_{\widetilde\omega}$ for some distinct $\omega,\widetilde\omega \in \Omega$.

Thus in either case there exists a vertex $v$ of $\widetilde{U_3}$ and some $w \in G$ such that $vw$ is a phase vertex of a $(1,36\nu)$-quasi-geodesic with endpoints $P_-$ and $P_+$ that does not backtrack, and such that $$|w|_X \leq E_0 := \max \{ |k|_X \mid k \in H_\omega \cap H_{\widetilde\omega}, \omega,\widetilde\omega \in \Omega, \omega \neq \widetilde\omega \},$$ where the right hand side is defined and finite by Theorem \ref{t:almaln}. Therefore, by Theorem \ref{t:bcp} \eqref{i:t-bcp-near} and Corollary \ref{c:logclose} it follows that this vertex $v$ satisfies $|v^{-1}u|_X \leq f_1$ for some vertex $u$ of $\widehat{P_0}$, where $$f_1 := (12\nu+1)\varepsilon(1,24\nu+6) + E_0 + \varepsilon(1,36\nu) + \tilde\lambda\log_2(|\hat{h}|_X)+\tilde{c}.$$ It follows that there exists some $z \in B_{G,X}(f_1)$ such that $$r_- = p_1z = p_2^{-1}zr_0$$ where $p_1,p_2 \in G$ are such that $$\hat{h} = p_1p_2 \qquad \text{and} \qquad |\hat{h}|_X = |p_1|_X + |p_2|_X.$$ Thus setting $f_0 := 2f_1+(12\nu+1)\varepsilon(1,24\nu+6)$ implies that $|r_-|_X \leq \frac{|\hat{h}|_X+f_0}{2}$, which gives the result.
\end{proof}

\section{Exponential negligibility of $\mathcal{P}$}
\label{s:pftmain}

This section is dedicated to a proof of Theorem \ref{t:main}. We need the following definition.

\begin{defn}
For a group $K$ with a finite generating set $Z$, the \emph{(exponential) growth rate} of $K$ with respect to $X$ is the limit $$\mu(K,Z) := \lim_{n \to \infty} \sqrt[n]{|B_{K,Z}(n)|}.$$ By submultiplicativity of ball sizes in $\Cay(K,Z)$ and a well-known result called Fekete's Lemma, it follows that this limit always exists and is equal to $\inf \{ \sqrt[n]{|B_{K,Z}(n)|} \mid n \in \Z_{\geq 0} \}$.
\end{defn}

In order to prove Theorem \ref{t:main}, we use growth tightness of relatively hyperbolic groups:

\begin{thm}[\cite{yang}, Corollary 1.7] \label{t:grtight}
Suppose $G$ is not virtually cyclic. Let $X$ be a finite generating set for $G$, and let $N \trianglelefteq G$ be an infinite normal subgroup. Let $\overline{X}$ be the image of $X$ under the quotient map $G \to G/N$, so that $\overline{X}$ is a finite generating set of $G/N$. Then $\mu(G/N,\overline{X}) < \mu(G,X)$.
\end{thm}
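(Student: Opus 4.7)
The plan is to establish strict loss of growth by exhibiting exponentially many lifts in $G$ of each element of $G/N$ under the quotient map $\pi \colon G \to G/N$. Since $\pi$ is surjective and $1$-Lipschitz on the respective word metrics, $\mu(G/N, \overline{X}) \leq \mu(G,X)$ is automatic, so all the content is to produce a ``pumping'' mechanism that inflates fibres of $\pi$ at an exponential rate in the word length.

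First, I would locate a suitable loxodromic element inside $N$. Because $G$ is non-elementary relatively hyperbolic (not virtually cyclic) and $N$ is an infinite normal subgroup, standard results on subgroups of relatively hyperbolic groups (going back to \cite{osin06def}) applied to the conjugation action of $G$ on $N$ imply that $N$ contains a hyperbolic (non-parabolic, infinite-order) element $h$. Replacing $h$ by a sufficiently high power $h_0 := h^{N_0}$, one can arrange that $h_0$ acts as a strongly contracting isometry of $\Cay(G, X \cup \HH)$ in the sense that its (quasi-)axis is a Morse quasi-geodesic.

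Next I would prove an extension lemma of the following form: there exist $L \geq 1$ and constants $(\lambda,c)$ so that for any elements $g_0, \ldots, g_m \in G$ with $|g_i|_X \leq L$ and any sequence $\varepsilon \in \{0,1\}^m$, the word
\[
w_\varepsilon := g_0 \, h_0^{\varepsilon_1} \, g_1 \, h_0^{\varepsilon_2} \cdots h_0^{\varepsilon_m} \, g_m
\]
labels a $(\lambda,c)$-quasi-geodesic in $\Cay(G, X \cup \HH)$ that does not backtrack. The proof combines the contracting property of $h_0$ with the BCP property (Theorem \ref{t:bcp}): distinct insertion patterns $\varepsilon$ cannot be absorbed by a geodesic joining the endpoints, so different $\varepsilon$ yield pairwise distinct $\overline{w}_\varepsilon \in G$. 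Crucially, because $h_0 \in N$ and $N \trianglelefteq G$, inserting or removing a copy of $h_0$ changes the element by a conjugate of $h_0$, hence by an element of $N$; all $\overline{w}_\varepsilon$ therefore share the same image in $G/N$.

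Applying the extension lemma to geodesic decompositions of elements, one obtains, for every $\bar{g} \in B_{G/N, \overline{X}}(n)$, at least $2^{\lfloor n/L \rfloor}$ distinct lifts lying in $B_{G,X}(Cn)$ for some universal $C \geq 1$. Summing over $\bar{g}$ and taking $n$-th roots relates $\mu(G,X)^C$ to $\mu(G/N, \overline{X})$; a more careful book-keeping (permitting insertions of $h_0^k$ for a range of $k$, and tuning $L$ against $|h_0|_X$) upgrades the comparison to the strict inequality $\mu(G,X) > \mu(G/N, \overline{X})$. I expect the main obstacle to be the extension lemma, in particular showing that different insertion patterns produce pairwise distinct group elements: this demands the contracting behaviour of $h_0$ beyond mere loxodromicity, and is the technical heart of Yang's framework. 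Once the extension lemma is in hand, the counting step and the passage to growth rates are standard.
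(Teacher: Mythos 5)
First, a point of orientation: the paper does not prove this statement at all --- it is imported verbatim from W.~Yang's work on growth tightness (the reference \cite{yang}), so your proposal has to be measured against the argument in that reference rather than against anything in this paper. Your general strategy (a contracting element plus an extension lemma) belongs to the right family of ideas, but the counting step at the end has a genuine quantitative gap that ``more careful book-keeping'' cannot close. If you produce $2^{\lfloor n/L\rfloor}$ lifts of each $\bar g\in B_{G/N,\overline{X}}(n)$ by inserting up to $m=\lfloor n/L\rfloor$ copies of $h_0$ into a shortest lift, those lifts live in $B_{G,X}(n+m|h_0|_X)$, so after taking $n$-th roots the comparison you obtain is $2^{1/L}\,\mu(G/N,\overline{X})\le\mu(G,X)^{1+|h_0|_X/L}$, i.e.\ $\mu(G/N,\overline{X})\le\mu(G,X)\cdot\bigl(\mu(G,X)^{|h_0|_X}/2\bigr)^{1/L}$. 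This yields the desired strict inequality only when $\mu(G,X)^{|h_0|_X}<2$, which essentially never holds --- $h_0$ is a high power of a hyperbolic element, so $|h_0|_X$ is large --- and tuning $L$ does not help, because the entropy gain $2^{1/L}$ and the length cost $\mu(G,X)^{|h_0|_X/L}$ scale identically in $L$. Allowing insertions of $h_0^k$ for $k$ in a range only worsens the ratio. In short: per unit of added word length, your pumping creates strictly fewer new elements than the ambient growth $\mu(G,X)$ already accounts for, so it can never detect a gap.

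The actual mechanism for strictness is different: one works with a transversal $T$ of \emph{shortest} coset representatives of $N$ (whose growth function coincides with that of $G/N$) and uses the extension lemma to inject the entire free-product set $\bigcup_{k\ge 1}(Tf)^k$ into $G$ with additive length control, for a single contracting element $f$ (which need not lie in $N$). Convergence of the Poincar\'e series of $G$ below its critical exponent then forces $\sum_{t\in T}s^{|t|_X}$ to stay bounded as $s\uparrow\mu(G,X)^{-1}$, and the strict inequality is extracted by playing this off against the divergence of the Poincar\'e series of $G$ at the critical exponent (purely exponential growth) together with the infinitude of $N$. It is this super-multiplicative use of arbitrarily long concatenations --- not a fibre count over each $\bar g$ --- that produces the gap. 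Two smaller issues with your sketch: (i) the existence of a hyperbolic infinite-order element in $N$ is true but requires the subgroup trichotomy for relatively hyperbolic groups together with almost malnormality of the peripherals (Theorem \ref{t:almaln}); (ii) the standard extension lemma gives quasi-geodesicity of $g_0f^{\epsilon_1}g_1\cdots$ for a \emph{suitable} choice of the powers $\epsilon_i$ from a small admissible set, not for every pattern $\varepsilon\in\{0,1\}^m$, and pairwise distinctness of the $\overline{w_\varepsilon}$ does not follow from quasi-geodesicity alone --- it needs the BCP shadowing argument you allude to, carried out uniformly over all pairs of patterns.
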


We also use Dehn filling in relatively hyperbolic groups, namely the following result.

\begin{thm}[\cite{osin07}, Theorem 1.1 (1)] \label{t:dfill}
Let $G$ be hyperbolic relative to a collection of subgroups $\{ H_\omega \}_{\omega \in \widetilde\Omega}$. Then there exists a finite subset $\mathcal{F}$ of $G \setminus \{1\}$ with the following property. Let $\{ N_\omega \}_{\omega \in \widetilde\Omega}$ be a collection of normal subgroups $N_\omega \trianglelefteq H_\omega$ such that $N_\omega \cap \mathcal{F} = \varnothing$ for each $\omega \in \widetilde\Omega$, and define a normal subgroup $N := \left\langle\!\left\langle \bigcup_{\omega \in \widetilde\Omega} N_\omega \right\rangle\!\right\rangle^G \trianglelefteq G$. Then for each $\omega \in \widetilde\Omega$, the natural map $H_\omega / N_\omega \to G / N$ is injective.
\end{thm}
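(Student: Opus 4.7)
The plan is to follow the standard strategy for group-theoretic Dehn filling: build a relative presentation for $G/N$ from the given one for $G$, and then use a minimal van Kampen diagram argument together with the BCP property of Theorem \ref{t:bcp} to force any identification $h \equiv 1$ in $G/N$ with $h \in H_\omega$ to come entirely from peripheral relators in $N_\omega$ alone.

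First I would fix a finite relative presentation $G = \langle X, \mathcal{H} \mid \mathcal{R}\rangle$ as in Definition \ref{d:rh}, so that $G/N$ is presented (as a group with distinguished peripheral family $\{\overline{H_\omega}\} = \{H_\omega/N_\omega\}$) by the same data modulo the extra relators $\bigcup_{\omega} N_\omega$; crucially no new non-peripheral relators are needed, since $N$ is normally generated by the $N_\omega$. Then I would define the forbidden finite set $\mathcal{F}$. Let $\nu$ be the constant of Theorem \ref{t:hyp}, $C$ the relative linear isoperimetric constant, $L := \max_{r \in \mathcal{R}} \ell_{X \cup \HH}(r)$, and $\varepsilon$ the BCP constant of Theorem \ref{t:bcp} applied at suitable quasi-geodesic parameters. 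Set
\[
K := K(\nu, C, L, \varepsilon), \qquad \mathcal{F} := \bigcup_{\omega \in \Omega} \bigl(H_\omega \cap B_{G, X \cup \HH}(K)\bigr) \setminus \{1\},
\]
for a sufficiently large threshold $K$. This $\mathcal{F}$ is finite because $|\Omega| < \infty$ by Theorem \ref{t:finmanypar} and each intersection is finite.

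Next, given $h \in H_\omega$ with $h = 1$ in $G/N$, I would express $h$ as a product in $F$ of conjugates of $\mathcal{R}$-relators and of $\bigcup N_\omega$-relators, and interpret the result as a van Kampen diagram $\Delta$ over the relative presentation of $G/N$; choose $\Delta$ of \emph{minimal} total area among all such diagrams. Call the 2-cells coming from $\mathcal{R}$ \emph{ordinary} and those with boundary in some $N_\lambda$ \emph{Dehn cells}. The heart of the proof is the following structural claim: in a minimal $\Delta$, no two Dehn cells sit in the same peripheral coset (else they combine into one, reducing area), and the boundary label of any Dehn cell, seen in $\Cay(G, X \cup \HH)$, has $X$-length $> \varepsilon$ because $N_\lambda$ avoids $\mathcal{F}$. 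A relative small-cancellation argument, using the relative isoperimetric inequality together with Theorem \ref{t:bcp}\eqref{i:t-bcp-ste}, then shows that two Dehn cells cannot share a piece of $X \cup \HH$-length exceeding a bound depending only on $\nu, C, L$; any longer shared piece, combined with minimality, would let us rewrite the boundary of a Dehn cell using the remaining relators, contradicting the choice of $\Delta$.

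Once this is established, the conclusion is essentially automatic. The boundary $\partial \Delta$ is a single peripheral edge labelled by $h \in H_\omega$, and BCP forces the Dehn cell adjacent to $\partial \Delta$ (if any) to be connected to $\partial \Delta$, hence to have its peripheral coset aligned with the $H_\omega$-coset of $\{1, h\}$. Iterating this alignment, all Dehn cells must come from $N_\omega$ and sit in the correct $H_\omega$-coset; all ordinary cells can then be absorbed into the peripheral substructure via the BCP step again, yielding an expression for $h$ as a product of $H_\omega$-conjugates of elements of $N_\omega$, i.e.\ $h \in N_\omega$. The main obstacle is the small-cancellation step in the middle: proving that elements of $N_\omega \setminus \mathcal{F}$ satisfy the required piece bounds with respect to both $\mathcal{R}$-consequences and with respect to Dehn cells from other cosets. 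This is precisely where the threshold $K$ defining $\mathcal{F}$ must be chosen, and making this quantitative requires the full relative hyperbolicity machinery (Theorems \ref{t:hyp} and \ref{t:bcp}) rather than any of the Cayley-graph geometry developed in Section \ref{s:geod}.
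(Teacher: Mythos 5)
You have set out to reprove a result that the paper does not prove at all: Theorem \ref{t:dfill} is quoted as a black box from Osin's paper on peripheral fillings \cite{osin07} (Theorem 1.1(1) there) and is only \emph{used} here, in the proof of Lemma \ref{l:expneg}. That said, your outline does follow the general shape of Osin's actual argument --- a van Kampen diagram over the relative presentation of $G/N$, chosen of minimal area, with a small-cancellation-style analysis of the cells labelled by elements of the $N_\omega$ --- so the strategy is not wrong in spirit. But as written it contains one concrete error and defers the genuinely hard steps.

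The concrete error is the definition of $\mathcal{F}$. You set $\mathcal{F} := \bigcup_{\omega} \bigl(H_\omega \cap B_{G, X \cup \HH}(K)\bigr) \setminus \{1\}$ and assert that each intersection is finite. It is not: every element of $H_\omega \setminus \{1\}$ is by definition a single letter of $\HH$, hence lies in $B_{G, X \cup \HH}(1)$, so for any $K \geq 1$ your $\mathcal{F}$ is all of $\bigcup_\omega H_\omega \setminus \{1\}$, which is infinite whenever some $H_\omega$ is. The forbidden set cannot be a ball in $\Cay(G, X \cup \HH)$; in Osin's proof it arises instead from the finitely many group elements that can label certain bounded configurations in van Kampen diagrams (and one cannot simply substitute a ball in $\Cay(G,X)$ either, since in Osin's generality $G$ need not be generated by $X$ alone). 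Beyond this, the two steps you yourself identify as the heart of the matter --- that in a minimal diagram two Dehn cells cannot share a long piece, and that the ordinary cells can be ``absorbed'' so that the boundary word $h$ is exhibited as a product of $H_\omega$-conjugates of elements of $N_\omega$ --- are precisely the content of Osin's quantitative isoperimetric estimates and are asserted rather than argued. As it stands the proposal is a plausible roadmap to the known proof, not a proof.
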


We fix a finite generating set $X$ of $G$ for the remainder of the section. The main ingredient in the proof of Theorem \ref{t:main} (apart from Theorem \ref{t:parbound1}) is the following Lemma.

\begin{lem} \label{l:expneg}
For any $\omega \in \Omega$, the subgroup $H_\omega \leq G$ is exponentially negligible in $G$ (with respect to $X$).
\end{lem}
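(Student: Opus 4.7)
The plan is to combine Dehn filling (Theorem~\ref{t:dfill}) with growth tightness (Theorem~\ref{t:grtight}): I will construct an infinite normal subgroup $N \trianglelefteq G$ such that the quotient map $\pi \colon G \to G/N$ restricts to an injection on $H_\omega$. Once such an $N$ is in place, Theorem~\ref{t:grtight} yields $\mu(G/N,\overline{X}) < \mu(G,X)$, and since $\pi$ does not increase word length and is injective on $H_\omega$, we have
\[
|H_\omega \cap B_{G,X}(n)| = |\pi(H_\omega \cap B_{G,X}(n))| \leq |B_{G/N,\overline{X}}(n)| \leq (\mu(G/N,\overline{X})+\varepsilon)^n
\]
for any fixed $\varepsilon > 0$ and all large $n$. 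Choosing $\varepsilon$ so that $\mu(G/N,\overline{X}) + \varepsilon < \mu(G,X)$ and comparing against $|B_{G,X}(n)| \geq \mu(G,X)^n$ gives $\delta_X(H_\omega, n) \leq \rho^{-n}$ for some $\rho > 1$, as required.

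First I would dispose of the trivial case in which $H_\omega$ is finite: then $|H_\omega \cap B_{G,X}(n)|$ is bounded above by $|H_\omega|$, while $\mu(G,X) > 1$ since $G$ is not virtually cyclic (which follows, for example, by applying Theorem~\ref{t:grtight} to any infinite normal subgroup of $G$), so the ratio $\delta_X(H_\omega, n)$ decays exponentially. Assume from now on that $H_\omega$ is infinite. To construct $N$ via Theorem~\ref{t:dfill}, I take $N_\omega := \{1\}$ (which trivially avoids $\mathcal{F}$ since $\mathcal{F} \subseteq G \setminus \{1\}$) and, for some other index $\omega' \in \Omega$, a non-trivial normal subgroup $N_{\omega'} \trianglelefteq H_{\omega'}$ with $N_{\omega'} \cap \mathcal{F} = \varnothing$; set $N_{\omega''} := \{1\}$ for the remaining indices. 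The resulting $N := \langle\!\langle \bigcup_{\omega''} N_{\omega''}\rangle\!\rangle^G$ contains $N_{\omega'}$ and is therefore infinite, while Theorem~\ref{t:dfill} supplies the injection $H_\omega = H_\omega/N_\omega \hookrightarrow G/N$.

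The main technical difficulty is producing the non-trivial $N_{\omega'}$ that avoids the finite set $\mathcal{F}$. Since $H_{\omega'}$ is infinite, one natural choice is a finite-index normal subgroup of $H_{\omega'}$ all of whose non-trivial elements have $G$-length strictly larger than $\max\{|f|_X : f \in \mathcal{F}\}$, or the normal closure in $H_{\omega'}$ of a sufficiently high power of a suitable element. The degenerate case $|\Omega| = 1$, where no distinct $\omega'$ is available, would require a separate treatment, for instance by augmenting the peripheral family with an auxiliary malnormal cyclic subgroup (to reduce to the case $|\Omega| \geq 2$) or by using a small-cancellation variant of Dehn filling directly on a carefully chosen loxodromic element of $G$.
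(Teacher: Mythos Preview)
Your overall strategy --- Dehn filling to produce an infinite normal $N \trianglelefteq G$ with $H_\omega \cap N$ finite, then growth tightness to compare $|H_\omega \cap B_{G,X}(n)|$ against $|B_{G/N,\overline X}(n)|$ --- is exactly the paper's approach, and your reduction from the existence of such an $N$ to exponential negligibility is correct. The gap is in the construction of $N$.

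Your primary construction asks for a nontrivial normal subgroup $N_{\omega'} \trianglelefteq H_{\omega'}$ (for some $\omega' \neq \omega$) avoiding the finite set $\mathcal F$. But the peripheral subgroups are entirely arbitrary: $H_{\omega'}$ need not be residually finite (so there may be no finite-index subgroup avoiding $\mathcal F$), need not contain an infinite-order element (so ``sufficiently high powers of a suitable element'' may not be available), and may even be finite with all its nontrivial elements lying in $\mathcal F$. Even when some $N_{\omega'}$ can be found, you still need the resulting $N$ to be \emph{infinite} in order to invoke Theorem~\ref{t:grtight}; this is not automatic from $N_{\omega'}$ being merely nontrivial. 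And, as you note yourself, the case $|\Omega|=1$ is a further obstruction.

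The paper sidesteps all of this with the move you only mention in passing at the end. It uses Osin's result \cite[Corollary~1.7]{osin06elem} that for any infinite-order hyperbolic element $g \in G$ (such $g$ exists by \cite[Corollary~4.5]{osin06elem}) one may adjoin the virtually cyclic elementary closure $E_G(g)$ to the peripheral collection and remain relatively hyperbolic. One then sets $N_\omega := \{1\}$ for \emph{every} original $\omega \in \Omega$ and performs the filling entirely inside $E_G(g)$, taking $N_0$ to be the normal core of $\langle g^m\rangle$ in $E_G(g)$ for $m$ large enough that $\langle g^m\rangle \cap \mathcal F = \varnothing$. This $N_0$ is automatically infinite (finite index in the infinite group $E_G(g)$), avoids $\mathcal F$, and the construction is available regardless of $|\Omega|$ or of any structural properties of the original $H_\omega$. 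In short, your fallback idea is the correct one and should have been the primary construction, not a contingency for the degenerate case.
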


\begin{proof}
The idea is to use results in \cite{osin06elem} and Theorem \ref{t:dfill} to find a quotient of $G$ whose growth could be compared to the growth of $H_\omega$, and then use Theorem \ref{t:grtight}.

Suppose first that there exists a normal subgroup $N \trianglelefteq G$ such that the number $M := |H_\omega \cap N|$ is finite. Then the quotient $G/N$ is generated by the set $\overline{X}$ of images of elements of $X$ under the quotient map, and clearly $$|gN|_{\overline{X}} \leq |g|_X$$ for any $g \in G$. Also, for fixed elements $g \in G$ and $t_0 \in H_\omega \cap gN$, we have $$|H_\omega \cap gN| = |\{ t_0^{-1}t \mid t \in H_\omega \cap gN \}| \leq |H_\omega \cap N| = M.$$ In particular, it follows that
\begin{align*}
|H_\omega \cap B_{G,X}(n)| &\leq \sum_{\substack{gN \in G/N \\ gN \cap B_{G,X}(n) \neq \varnothing}} |H_\omega \cap gN| \\ &\leq M | \{ gN \in G/N \mid gN \cap B_{G,X}(n) \neq \varnothing \} | \\ &\leq M |B_{G/N,\overline{X}}(n)|.
\end{align*}
Thus, by Theorem \ref{t:grtight} it follows that as long as $N$ is infinite we have $$\limsup_{n \to \infty} \sqrt[n]{\frac{|H_\omega \cap B_{G,X}(n)|}{|B_{G,X}(n)|}} < 1,$$ which implies that $H_\omega$ is exponentially negligible in $G$. Thus the problem reduces to showing that there exists an infinite normal subgroup $N \trianglelefteq G$ such that $H_\omega \cap N$ is finite.

To construct such a subgroup, we use Dehn filling in relatively hyperbolic groups. Let $g \in G$ be a hyperbolic element (i.e.\ an element of $G \setminus \mathcal{P}$) such that the order of $g$ is infinite: such an element exists by \cite[Corollary 4.5]{osin06elem}. Consider the subgroup $$E_G(g) := \{ h \in G \mid h^{-1} g^n h = g^{\pm n} \text{ for some } n \geq 1 \}.$$ Clearly $g \in E_G(g)$, and by \cite[Lemma 4.1]{osin06elem}, the index of $\langle g \rangle \cong \Z$ in $E_G(g)$ is finite. Also, by \cite[Corollary 1.7]{osin06elem}, $G$ is hyperbolic relative to the collection $\{ H_\omega \}_{\omega \in \Omega} \cup \{ E_G(g) \}$. Let $\widetilde\Omega := \Omega \sqcup \{0\}$ and let $H_0 := E_G(g)$.

Now let $\mathcal{F} \subseteq G \setminus \{1\}$ be the finite subset given by Theorem \ref{t:dfill} applied to $G$ and the collection of subgroups $\{ H_\omega \}_{\omega \in \widetilde\Omega}$. Since $\mathcal{F}$ is finite, we have $\langle g^m \rangle \cap \mathcal{F} = \varnothing$ for $m \in \Z_{\geq 1}$ large enough. Let $N_\omega := \{1\}$ for $\omega \in \Omega = \widetilde\Omega \setminus \{0\}$, and let $N_0 := \bigcap_{h \in E_G(g)} \langle h^{-1}g^mh \rangle \trianglelefteq E_G(g)$ be the \emph{normal core} of $\langle g^m \rangle$ in $E_G(g) = H_0$, i.e.\ the kernel of the action of $E_G(g)$ on the set of left cosets of $\langle g^m \rangle$ in $E_G(g)$. As the index of $\langle g^m \rangle$ in $E_G(g)$ is finite, so is the index of $N_0$ in $E_G(g)$, so in particular, as $E_G(g)$ is infinite, so is $N_0$. Therefore, applying Theorem \ref{t:dfill} yields an infinite normal subgroup $N = \langle\!\langle N_0 \rangle\!\rangle^G$ of $G$ such that, for each $\omega \in \Omega$, the group $H_\omega \cap N$ is trivial, hence finite.
\end{proof}

\begin{proof}[Proof of Theorem \ref{t:main}]
The proof follows from Theorem \ref{t:parbound1} and Lemma \ref{l:expneg}. In particular, by Lemma \ref{l:expneg} it follows that there exists a constant $\rho > 1$ such that $\frac{|H_\omega \cap B_{G,X}(n)|}{|B_{G,X}(n)|} \leq \rho^{-n}$ for all sufficiently large $n$. Note that we have $\mu := \mu(G,X) > 1$: otherwise, if $\mu = 1$, Theorem \ref{t:grtight} would imply that $N=G$ is \emph{not} an infinite normal subgroup of $G$, and so $G$ is finite, contradicting the assumption that $G$ is not virtually cyclic.

Now choose a constant $\varepsilon > 0$ such that $\varepsilon < \mu (\min\{\mu,\rho\}-1).$ Then there exists a constant $n_0 \in \Z_{\geq 0}$ such that $$\frac{|H_\omega \cap B_{G,X}(n)|}{|B_{G,X}(n)|} \leq \rho^{-n} \qquad \text{and} \qquad |S_{G,X}(n)| \leq |B_{G,X}(n)| \leq (\mu+\varepsilon)^n$$ for all $n \geq n_0$; note also that $|B_{G,X}(n)| \geq \mu^n$ for all $n \in \Z_{\geq 0}$.

Now let $n \geq 3n_0$. By Theorem \ref{t:finmanypar}, it follows that it is enough to find an exponential (with base $<1$) upper bound on the number $$\sum_{i=0}^{\left\lfloor \frac{n+f(n)}{2} \right\rfloor} \frac{|S_{G,X}(i)| |H_\omega \cap B_{G,X}(n+f(n)-2i)|}{|B_{G,X}(n)|}$$ for any fixed $\omega \in \Omega$. We do this in three parts.

For $i < n_0$, we have
\begin{align*}
\sum_{i=0}^{n_0-1} &\frac{|S_{G,X}(i)| |H_\omega \cap B_{G,X}(n+f(n)-2i)|}{|B_{G,X}(n)|} \\ &\leq \sum_{i=0}^{n_0-1} |S_{G,X}(i)| \left(\frac{\mu+\varepsilon}{\rho}\right)^{n+f(n)-2i} \mu^{-n} \\ &\leq |B_{G,X}(n_0-1)| \left(\frac{\mu+\varepsilon}{\rho}\right)^{f(n)} \left(\frac{\mu+\varepsilon}{\rho\mu}\right)^n
\end{align*}
and $\frac{\mu+\varepsilon}{\rho\mu} < 1$ by the choice of $\varepsilon$, so since $\frac{f(n)}{n} \to 0$ as $n \to \infty$ we get exponential convergence to zero, as required.

For $i \geq n_0$ and $n+f(n)-2i \geq n_0$, we have
\begin{align*}
\sum_{i=n_0}^{\left\lfloor \frac{n+f(n)-n_0}{2} \right\rfloor} &\frac{|S_{G,X}(i)| |H_\omega \cap B_{G,X}(n+f(n)-2i)|}{|B_{G,X}(n)|} \\ &\leq \sum_{i=n_0}^{\left\lfloor \frac{n+f(n)-n_0}{2} \right\rfloor} (\mu+\varepsilon)^i \left(\frac{\mu+\varepsilon}{\rho}\right)^{n+f(n)-2i} \mu^{-n} \\ &= \sum_{i=n_0}^{\left\lfloor \frac{n+f(n)-n_0}{2} \right\rfloor} \left(\frac{\mu+\varepsilon}{\rho\mu}\right)^{n} \left( \frac{\rho^2}{\mu+\varepsilon} \right)^i \left(\frac{\mu+\varepsilon}{\rho}\right)^{f(n)}.
\end{align*}
Now if $\frac{\rho^2}{\mu+\varepsilon} \leq 1$, the result follows immediately as above. If instead $\frac{\rho^2}{\mu+\varepsilon} > 1$, we can bound the above expression by
\begin{align*}
\left(\frac{\mu+\varepsilon}{\rho\mu}\right)^{n} \frac{\left( \frac{\rho}{\sqrt{\mu+\varepsilon}} \right)^{n+f(n)+2}}{\frac{\rho^2}{\mu+\varepsilon}-1} \left(\frac{\mu+\varepsilon}{\rho}\right)^{f(n)} = \frac{(\sqrt{\mu+\varepsilon})^{f(n)}}{1-\frac{\mu+\varepsilon}{\rho^2}} \left(\frac{\sqrt{\mu+\varepsilon}}{\mu}\right)^n
\end{align*}
and since $\frac{\mu+\varepsilon}{\mu^2} < 1$ by the choice of $\varepsilon$, we get exponential convergence as before.

Finally, for $n+f(n)-2i < n_0$, we have
\begin{align*}
\sum_{i=\left\lfloor \frac{n+f(n)-n_0}{2} \right\rfloor+1}^{\left\lfloor \frac{n+f(n)}{2} \right\rfloor} &\frac{|S_{G,X}(i)| |H_\omega \cap B_{G,X}(n+f(n)-2i)|}{|B_{G,X}(n)|} \\ &\leq \sum_{i=\left\lfloor \frac{n+f(n)-n_0}{2} \right\rfloor+1}^{\left\lfloor \frac{n+f(n)}{2} \right\rfloor} (\mu+\varepsilon)^i |H_\omega \cap B_{G,X}(n+f(n)-2i)| \mu^{-n} \\ &\leq \frac{(\sqrt{\mu+\varepsilon})^{n+f(n)+2}}{\mu+\varepsilon-1} |H_\omega \cap B_{G,X}(n_0-1)| \mu^{-n} \\ &= \frac{|H_\omega \cap B_{G,X}(n_0-1)| (\sqrt{\mu+\varepsilon})^{f(n)}}{1-\frac{1}{\mu+\varepsilon}} \left(\frac{\sqrt{\mu+\varepsilon}}{\mu}\right)^n
\end{align*}
and so we again get exponential convergence. It follows that $\mathcal{P}$ is exponentially negligible.
\end{proof}

\section{Degree of commutativity}
\label{s:cor}

This section is dedicated to the proofs of Corollaries \ref{c:loxo} and \ref{c:dc}.

Given Theorem \ref{t:main}, the proof of Corollary \ref{c:loxo} is easy. Indeed, it is easy to check (either directly from Definition \ref{d:rh}, or by using characterisation of hyperbolically embedded subgroups given by \cite[Theorem 1.5]{osin06elem}) that if a group $G$ is hyperbolic relative to $\{ H_\omega \}_{\omega \in \Omega}$ and $F \leq G$ is any finite subgroup, then $G$ is hyperbolic relative to $\{ H_\omega \}_{\omega \in \Omega} \cup \{F\}$. But there are only finitely many conjugacy classes of finite-order hyperbolic elements in $G$ (i.e.\ elements of $G \setminus \mathcal{P}$) \cite[Theorem 4.2]{osin06def}, hence there exists a finite collection $\{ F_1,\ldots,F_m \}$ of finite cyclic subgroups of $G$ such that any hyperbolic element of finite order is conjugate to an element of one of the $F_j$. Thus $G$ is hyperbolic relative to $\{ H_\omega \}_{\omega \in \Omega} \cup \{ F_1,\ldots,F_m \}$, and with this structure of a relatively hyperbolic group every hyperbolic element of $G$ has infinite order. Corollary \ref{c:loxo} then follows directly from Theorem \ref{t:main}.

Given the previous paragraph, we may without loss of generality assume that all hyperbolic elements in $G$ have infinite order. The proof of Corollary \ref{c:dc} then follows closely the proof of \cite[Theorem 1.7]{amv}, stating an analogous result for ordinary hyperbolic groups. The following Lemma has been stated as Lemma 3.1 in \cite{amv} in a slightly different form, but the proof remains the same. For a group $G$ and an element $g \in G$, let $C_G(g)$ denote the centraliser of $g$ in $G$.

\begin{lem} \label{l:amv}
Let $G$ be a group generated by a finite subset $X$, and let $\mathcal{N} \subseteq G$ be a subset such that \begin{enumerate}[(i)]
\item \label{i:l-amv-negl} $\mathcal{N}$ is exponentially negligible in $G$ with respect to $X$, and
\item \label{i:l-amv-cent} there exist constants $\rho > 1$ and $n_0 \geq 0$ such that $$|C_G(g) \cap B_{G,X}(n)| \leq \rho^{-n} |B_{G,X}(n)|$$ for all $g \in G \setminus \mathcal{N}$ and $n \geq n_0$.
\end{enumerate}
Then $\{ (x,y) \in G^2 \mid xy=yx \}$ is exponentially negligible in $G$ with respect to $X$.
\end{lem}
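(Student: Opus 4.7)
The plan is to rewrite the count of commuting pairs as a sum over centralisers and then split the sum according to whether the first coordinate lies in $\mathcal{N}$ or not. Specifically, since $(x,y)$ commute if and only if $y \in C_G(x)$, we have
\begin{equation*}
\bigl|\{(x,y) \in B_{G,X}(n)^2 \mid xy=yx\}\bigr| = \sum_{x \in B_{G,X}(n)} |C_G(x) \cap B_{G,X}(n)|.
\end{equation*}
I would split this sum into $x \in \mathcal{N} \cap B_{G,X}(n)$ and $x \in B_{G,X}(n) \setminus \mathcal{N}$.

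For the first piece, I would use only the trivial bound $|C_G(x) \cap B_{G,X}(n)| \leq |B_{G,X}(n)|$, paired with assumption \eqref{i:l-amv-negl}: there is a constant $\rho_1 > 1$ such that $|\mathcal{N} \cap B_{G,X}(n)| \leq \rho_1^{-n} |B_{G,X}(n)|$ for all sufficiently large $n$. This gives a contribution of at most $\rho_1^{-n} |B_{G,X}(n)|^2$.

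For the second piece, I would apply assumption \eqref{i:l-amv-cent} directly to each summand and bound the number of terms by $|B_{G,X}(n)|$, obtaining a contribution of at most $\rho^{-n} |B_{G,X}(n)|^2$ once $n \geq n_0$. Adding the two estimates and dividing by $|B_{G,X}(n)|^2$ yields
\begin{equation*}
\delta_X(\{(x,y) \mid xy=yx\},n) \leq \rho_1^{-n} + \rho^{-n} \leq 2\bigl(\min\{\rho,\rho_1\}\bigr)^{-n}
\end{equation*}
for all sufficiently large $n$, which is of the required form for exponential negligibility (after slightly shrinking the base to absorb the factor of $2$). There is no real obstacle here: the lemma is essentially a bookkeeping exercise, and the only subtlety is the standard trick of handling the ``bad'' elements $x \in \mathcal{N}$ via the trivial centraliser bound and the ``good'' elements via hypothesis \eqref{i:l-amv-cent}.
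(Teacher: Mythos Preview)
Your argument is correct and is exactly the standard proof: the paper does not spell out a proof but simply refers to \cite[Lemma 3.1]{amv}, whose argument is precisely the centraliser-sum decomposition you describe, splitting over $x \in \mathcal{N}$ versus $x \notin \mathcal{N}$ and bounding each piece by the corresponding hypothesis.
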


Thus, by taking $\mathcal{N} = \mathcal{P}$, in the view of Corollary \ref{c:loxo} it is enough to show Lemma \ref{l:amv} \eqref{i:l-amv-cent}.

\begin{proof}[Proof of Corollary \ref{c:dc}]
It is known \cite[Lemma 4.1]{osin06elem} that given any hyperbolic element $g \in G$, the centraliser $C_G(g)$ contains $\langle g \rangle$ as a finite index subgroup; in particular, $C_G(g)$ is a $2$-ended subgroup of $G$. In this case, a classical result \cite[Lemma 4.1]{wall} tells that $C_G(g)$ fits into an exact sequence $$1 \to F \to C_G(g) \to Q \to 1$$ where $F$ is finite and $Q$ is either $\Z$ or $C_2 \ast C_2$. Thus $C_G(g)$ has a subgroup $K$ of index at most $2$ such that $K/F \cong \Z$ for a finite normal subgroup $F \trianglelefteq K$.

Note that $K$ contains $g^2 \in G \setminus \mathcal{P}$ and so $K$ is a $2$-ended subgroup not contained in $\mathcal{P}$. Since $F \leq K$ is a finite subgroup, \cite[Lemma 9.4]{louder} tells that there is a universal constant $m_0$ (independent of $g$) such that $|F| \leq m_0$. Since the sequence $$1 \to F \to K \to \Z \to 1$$ splits, this means that $C_G(g)$ contains a normal subgroup $\langle k_g \rangle \cong \Z$ of index $\leq 2m_0$. It is clear that $k_g \notin \mathcal{P}$: otherwise $C_G(g) \cap H_\omega^{g_0}$ is infinite for some $\omega \in \Omega$ and $g_0 \in G$, which cannot happen since $$C_G(g) \cap H_\omega^{g_0} \leq H_\omega^{g_0g} \cap H_\omega^{g_0} = \left( H_\omega^{g_0gg_0^{-1}} \cap H_\omega \right)^{g_0}$$ and $H_\omega^{g_0gg_0^{-1}} \cap H_\omega$ is finite by Theorem \ref{t:almaln} since $g \notin \mathcal{P}$.

Now consider the \emph{translation length function} in $G$, defined as $$\tau(g) := \limsup_{n \to \infty} \frac{|g^n|_{X \cup \HH}}{n} = \inf \left\{ \frac{|g^n|_{X \cup \HH}}{n} \;\middle|\; n \in \Z_{\geq 0} \right\}$$ for $g \in G \setminus \mathcal{P}$, where the second inequality follows from Fekete's Lemma since the sequence $(|g^n|_{X \cup \HH})_{n=0}^\infty$ is subadditive. It is known \cite[Theorem 4.25]{osin06def} that there exists a universal constant $\zeta > 0$ such that $\tau(g) \geq \zeta$ for all $g \in G \setminus \mathcal{P}$.

Finally, pick an element $g \in G \setminus \mathcal{P}$. Then $\tau(k_g) \geq \zeta$ for $k_g \in G \setminus \mathcal{P}$ as above, and so if $k_g^m \in B_{G,X}(n) \subseteq B_{G,X \cup \HH}(n)$, then $|m| \leq n/\zeta$. It follows that $$|\langle k_g \rangle \cap B_{G,X}(n)| \leq \frac{2n}{\zeta}+1.$$ Moreover, if $g_0 \langle k_g \rangle \cap B_{G,X}(n) \neq \varnothing$ for some $g_0 \in G$ then we may assume that $g_0 \in B_{G,X}(n)$ and so $g_0 \langle k_g \rangle \cap B_{G,X}(n) \subseteq g_0 (\langle k_g \rangle \cap B_{G,X}(2n))$. Thus $$|g_0\langle k_g \rangle \cap B_{G,X}(n)| \leq \frac{4n}{\zeta}+1.$$ Since the index of $\langle k_g \rangle$ in $C_G(g)$ is $\leq 2m_0$, this implies that $$|C_G(g) \cap B_{G,X}(n)| \leq 2m_0 \left( \frac{4n}{\zeta}+1 \right),$$ which gives a linear (and so subexponential) bound independent of $g$. Since by assumption $G$ is not virtually cyclic, the growth rate of $G$ is $\mu(G,X) > 1$, and so the proof is complete.
\end{proof}

\bibliographystyle{amsplain}
\bibliography{../../all}

\end{document}